\numberwithin{equation}{section}
\newtheorem{theorem}{Theorem}[section]
\newtheorem{lem}[theorem]{Lemma}
\newtheorem{proposition}[theorem]{Proposition}
\newtheorem{lemma}[theorem]{Lemma}
\theoremstyle{definition}
\theoremstyle{remark}
\newtheorem{remark}[theorem]{Remark}
\newcommand{\eps}{\varepsilon}
\DeclareMathOperator{\area}{\mathrm{Area}}
\newcommand{\Sph}{\mathbb{S}}
\newcommand{\DD}{\mathbb{D}}
\newcommand{\T}{\mathbb{T}}
\newcommand{\R}{\mathbb{R}}
\newcommand{\Isom}{\mathrm{Isom}}
\newcommand{\Met}{\mathrm{Met}}
\definecolor{light-gray}{gray}{.95}
\definecolor{dark-gray}{gray}{.7}
\begin{document}

\title[Existence of $\bar{\lambda}_1$-maximizing metrics]{Existence of metrics maximizing the first Laplace eigenvalue on closed surfaces}
\author[M.~Karpukhin]{Mikhail~Karpukhin}
\author[R.~Petrides]{Romain~Petrides}
\author[D.~Stern]{Daniel~Stern}
\date{}
\address{Department of Mathematics, University College London, 25 Gordon Street, London, WC1H 0AY, UK} \email{m.karpukhin@ucl.ac.uk}
\address{Universit\'e Paris Cit\'e, Institut de Math\'ematiques de Jussieu - Paris Rive Gauche, b\^atiment Sophie Germain, 75205 PARIS Cedex 13, France}
\email{romain.petrides@imj-prg.fr}
\address{Department of Mathematics, Cornell University, Ithaca NY, 14853} \email{daniel.stern@cornell.edu}

\begin{abstract}

Building on seminal work of Nadirashvili and previous work of the authors, we prove the existence of metrics maximizing the area-normalized first eigenvalue of the Laplacian on every closed nonorientable surface, and give a simple new proof of existence in the orientable case complementing that of~\cite{PetridesNew}, thus resolving the long-standing existence problem for $\lambda_1$-maximizing metrics on closed surfaces of any topology. Namely, we prove by contradiction that the supremum $\Lambda_1(M)$ of the normalized first eigenvalue over all metrics on $M$ obeys the strict monotonicity $\Lambda_1(M\#\mathbb{RP}^2)>\Lambda_1(M)$ and $\Lambda_1(M\#\mathbb{T}^2)>\Lambda_1(M)$ under the attachment of cross-caps and handles, via a substantial refinement of techniques introduced in \cite{KKMS}.

\end{abstract}

\maketitle

\section{Introduction}

Let $M$ be a compact surface without boundary, and for a given metric $g$ on $M$, consider the first nonzero eigenvalue $\lambda_1(\Delta_g)$ of the Laplacian $\Delta_g=d_g^*d$. Multiplying by the area gives a scale-invariant quantity
$$
\bar{\lambda}_1(M,g):=\lambda_1(\Delta_g)\area(M,g),
$$
which can be viewed as a functional on the space $\Met(M)$ of Riemannian metrics on $M$. While the infimum of $\bar{\lambda}_1(M,g)$ over metrics on $M$ is easily seen to vanish, a series of results dating back to the 1970s show that the supremum
$$
\Lambda_1(M):=\sup\left\{\bar{\lambda}_1(M,g) \mid g\in \Met(M)\right\}
$$
is not only finite, but realized in many cases by distinguished metrics on $M$~\cite{YangYau, Hersch, NadirashviliT2, LiYau, EGJ, JNP, JLNNP, NaySh}. 

In the 1990s, as an ingredient in the characterization of $\Lambda_1(\mathbb{T}^2)$, Nadirashvili made the deep observation that critical metrics for the functional $\bar{\lambda}_1(M,g)$ are induced by minimal immersions into round spheres by first eigenfunctions of the Laplacian~\cite{NadirashviliT2}. In the decades since, this link between critical metrics for $\bar{\lambda}_1$ and minimal surfaces has served both as a key tool and an important source of motivation in the development of the variational theory for $\bar{\lambda}_1$, and related invariants for higher eigenvalues and other operators~\cite{FSinvent, Petridesk, PetridesSt, PetridesComb, PetridesCombSt,  KNPP1, KRP2, PetTew, LimaMenezes, GPA, KMP}.

In the wake of these developments, attention has turned more recently to the question of \emph{existence} of extremal metrics for $\bar{\lambda}_1$ and related functionals. In~\cite{Petrides1}, building on ideas of Fraser and Schoen~\cite{FSinvent}, Petrides gave a general existence result for metrics maximizing $\bar{\lambda}_1$ within any fixed conformal class on a surface $M$, reducing the global existence question to the question of precompactness of maximizing sequences within the moduli space of conformal structures on $M$. In particular, the results of~\cite{Petrides1} in the orientable setting and~\cite{MS2} for nonorientable surfaces reduce the global existence problem to that of establishing strict monotonicity for the supremum $\Lambda_1(M)$ with respect to the attachment of handles and cross-caps, as follows.

\begin{theorem}[Petrides~\cite{Petrides1}, Matthiesen-Siffert~\cite{MS2}]
\label{gap.suff}

Suppose that 
$$
\Lambda_1(M)>\Lambda_1(M_0)
$$
for any closed surface $M_0$ such that $M\approx M_0\# \mathbb{RP}^2$ or $M\approx M_0\#\T^2$. Then $M$ admits a $\bar{\lambda}_1$-maximizing metric, which is smooth up to finitely many isolated conical singularities.
\end{theorem}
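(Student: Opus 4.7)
The plan is to argue by contradiction via a compactness analysis of $\bar{\lambda}_1$-maximizing sequences. Let $g_n$ be a sequence of smooth metrics on $M$ with $\bar\lambda_1(M,g_n)\to\Lambda_1(M)$. Using the conformal-class existence result of Petrides~\cite{Petrides1}, I would first replace each $g_n$ by a maximizer $g_n'\in[g_n]$ for $\bar\lambda_1$ within its conformal class, so that each new $g_n'$ is induced, up to isolated conical singularities, by a minimal branched immersion $\Phi_n\colon M\to \Sph^{k_n}$ whose components are first eigenfunctions of $\Delta_{g_n'}$. This reduces the global maximization problem to a compactness question for $\{[g_n]\}$ in the moduli space $\Mcal(M)$ of conformal structures on $M$.

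In the easy case, $\{[g_n]\}$ lies in a compact part of $\Mcal(M)$. After a subsequence, $[g_n]\to[g_\infty]$ for some smooth conformal class on $M$, and standard convergence results for the minimal immersions $\Phi_n$ (uniform energy and area bounds, $\eps$-regularity for harmonic maps) yield a limit branched minimal map $\Phi_\infty$ inducing a metric $g_\infty'\in[g_\infty]$ with $\bar\lambda_1(M,g_\infty')=\Lambda_1(M)$, smooth away from the branch points of $\Phi_\infty$.

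In the hard case, $\{[g_n]\}$ degenerates in $\Mcal(M)$. By the Deligne--Mumford type compactification of moduli space, after a subsequence and composing with diffeomorphisms, the degeneration corresponds to the pinching of a disjoint family of simple closed curves $\Gamma\subset M$ along which $(M,g_n)$ develops collapsing cylindrical necks. Cutting along $\Gamma$ produces a (possibly disconnected, possibly nodal) surface $M_0$ of strictly smaller topological complexity, and $M$ can be reconstructed from the components of $M_0$ by attaching handles and cross-caps. Iterating the assumed strict monotonicity then gives $\Lambda_1(M_0)<\Lambda_1(M)$, so a contradiction will follow from the upper semicontinuity
\[
\limsup_{n\to\infty}\bar\lambda_1(M,g_n)\le\Lambda_1(M_0).
\]

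The main obstacle is proving this semicontinuity; it is the content of the neck analysis in~\cite{Petrides1} in the orientable case and~\cite{MS2} in the nonorientable case. The issue is that when one cuts off the $\Phi_n$ outside a neighborhood of the collapsing necks, the resulting truncated maps only yield good test functions on $M_0$ if no essential first-eigenfunction $L^2$ mass of $\Phi_n$ concentrates in the thin cylindrical regions. To rule this out, one combines bubble-tree convergence for sphere-valued harmonic maps (any bubble forming inside a neck must absorb a quantized amount of energy, limiting the bubble count) with an eigenvalue-equation argument on long thin cylinders, where the eigenfunction equation forces exponential $L^2$ decay of each coordinate of $\Phi_n$ toward the neck center. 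The combination of these two ingredients makes it possible to transplant suitably truncated and renormalized eigencomponents of $\Phi_n$ to $M_0$ and obtain test functions realizing the limiting eigenvalue $\Lambda_1(M)$ in the limit, yielding the desired upper bound and hence the contradiction.
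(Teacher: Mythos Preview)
The paper does not supply its own proof of this statement: Theorem~\ref{gap.suff} is quoted as a result of Petrides~\cite{Petrides1} (orientable case) and Matthiesen--Siffert~\cite{MS2} (nonorientable case) and is used only as a black box in Section~5. Your sketch is therefore not competing with anything in the present paper but rather outlining the arguments in those references. As a broad summary of the Petrides/Matthiesen--Siffert strategy --- replace a maximizing sequence by conformal maximizers, then study compactness of the underlying conformal classes in moduli space, showing that degeneration forces $\limsup\bar\lambda_1$ to be bounded by $\Lambda_1$ of a surface of lower topology --- your description is accurate.

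Two points in your write-up are imprecise, though. First, you say that ``iterating the assumed strict monotonicity'' gives $\Lambda_1(M_0)<\Lambda_1(M)$ when the degeneration drops several handles or cross-caps at once, but the hypothesis of the theorem only provides strict inequality for a \emph{single} step $M_0\#\T^2$ or $M_0\#\RP^2$. The actual argument bridges the intermediate surfaces using the unconditional non-strict monotonicity of~\cite{CES}, and applies the strict hypothesis once at the final step; you cannot iterate an assumption that is only stated for one attachment. Second, when the limit surface $M_0$ is disconnected the quantity $\Lambda_1(M_0)$ is not directly meaningful (the first eigenvalue vanishes), and the correct upper bound is in terms of $\Lambda_1$ of the individual components; matching this against the hypothesis requires a short additional argument. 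Your final paragraph on the neck analysis names the right ingredients (bubble control for the harmonic maps, decay of eigenfunctions along long cylinders), but the heart of~\cite{Petrides1, MS2} is exactly in making those steps quantitative, and your sketch does not yet indicate how to do so.
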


While the non-strict inequalities 
\[
\Lambda_1(M_0\#\mathbb{RP}^2)\geq \Lambda_1(M_0), \quad
\Lambda_1(M_0\# \T^2)\geq \Lambda_1(M_0)
\]
have been known for some time (see e.g.~\cite{CES}), proving the strict inequality for all topologies turns out to be a more challenging problem. One natural strategy would be to proceed by an induction on topological complexity of the surface, assuming the existence of a $\bar{\lambda}_1$-maximizing metric $g_{\max}$ on the surface $M_0$, then attaching to $(M_0,g_{\max})$ a handle or cross-cap with carefully chosen geometry in such a way that the resulting metric $g'$ on $M_0\# \mathbb{RP}^2$ or $M_0\# \T^2$ can be shown to satisfy $\bar{\lambda}_1(g')>\bar{\lambda}_1(M_0,g)$. For such constructions, however, directly estimating the difference $\bar{\lambda}_1(g')-\bar{\lambda}_1(M_0,g)$ is in practice quite delicate, and previous attempts to carry out this strategy by attaching handles and cross-caps with a hyperbolic structure as in~\cite{MatthiesenSiffert} are unfortunately known to contain gaps. 

In~\cite{KKMS}, an alternative approach was developed for establishing strict monotonicy of $\Lambda_1$ under handle attachment, where it was applied successfully in the presence of certain discrete symmetry groups, and used to obtain the partial result $\Lambda_1(M_0\# \T^2\#\T^2)>\Lambda_1(M_0)$ for the original $\bar{\lambda}_1$-maximization problem. In this approach, rather than trying to estimate $\bar{\lambda}_1$ directly for special metrics on $M_0\#\T^2$, a crude attachment of flat handles is used instead to supply a conformal class $[g']$ on $M_0\#\T^2$, for which one uses the estimate 
$$
\Lambda_1(M_0\#\T^2,[g']):=\sup_{h\in [g']}\bar{\lambda}_1(h)\leq \Lambda_1(M_0\#\T^2)\leq \Lambda_1(M_0)
$$
to obtain a contradiction. Roughly speaking, applying this estimate for a certain shrinking family of handles attached at a prescribed pair of points, the results of~\cite{KSjems} and a collection of elementary estimates are then used to deduce the existence of sphere-valued maps $F\colon (M_0\#\T^2,g')\to \mathbb{S}^n$ with small energy in the handle region, that limit in a $W^{1,2}$-sense to a map $\Phi\colon M_0\to \mathbb{S}^n$ by first eigenfunctions for $\Delta_{g_{\max}}$ identifying the pair of points in $M$ at which the handle was attached. In many cases, this leads to the desired contradiction.

Recently, the existence theory for $\bar{\lambda}_1$-maximizing metrics on oriented surfaces was completed in~\cite{PetridesNew}, where the gap $\Lambda_1(M_0\#\T^2)>\Lambda_1(M)$ is obtained by a contradiction argument, combining ideas from~\cite{KKMS} with a delicate application of the Ekeland variational principle for a carefully chosen $\bar{\lambda}_1$-maximizing sequence on $M_0\#\T^2$. As in~\cite{KKMS}, the contradiction comes from the existence of sphere-valued maps $\Phi_{p,q}\colon M_0\to \mathbb{S}^n$ by first eigenfunctions identifying a prescribed pair of points $p,q\in M_0$; the key new feature is that these maps can now also be taken to be \emph{weakly conformal}. In particular, fixing $p$ and letting $q\to p$ in this construction, one obtains a weakly conformal map $\Phi\colon M_0\to \mathbb{S}^n$ by first eigenfunctions for which $d\Phi(p)=0$, contradicting the identity $|d\Phi|^2\equiv \lambda_1$ that holds for all such maps. Moreover, this argument also has a natural generalization to other eigenvalue functionals; see~\cite{PetridesNew} for details.

Evidently, the two-point arguments of~\cite{KKMS} and~\cite{PetridesNew} are tailored to handle-attachment constructions, and lack an immediate analog for the attachment of M\"obius bands sufficient to complete the existence theory for nonorientable surfaces. In the present paper, by improving on the techniques developed in~\cite{KKMS}, we succeed in proving the desired monotonicity $\Lambda_1(M_0\#\mathbb{RP}^2)>\Lambda_1(M_0)$ under cross-cap attachment, and use similar ideas to give a simple, self-contained new proof of the handle monotonicity $\Lambda_1(M_0\#\T^2)>\Lambda_1(M_0)$ for all closed surfaces, completing the existence theory for $\bar{\lambda}_1$-maximizing metrics.

\begin{theorem}\label{gap.thm}
For any closed surface $M$, we have $\Lambda_1(M\#\mathbb{RP}^2)>\Lambda_1(M)$ and $\Lambda_1(M\#T^2)>\Lambda_1(M)$. In particular, every closed surface admits a $\bar{\lambda}_1$-maximizing metric, which is smooth up to finitely many conical singularities.
\end{theorem}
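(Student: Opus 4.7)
The plan is to prove both inequalities by a single contradiction argument that constitutes a substantial refinement of the framework introduced in~\cite{KKMS}. Proceeding by strong induction on topological complexity, I would suppose for contradiction that $\Lambda_1(M\#N)=\Lambda_1(M)$ for some closed surface $M$ and some $N\in\{\RP^2,\T^2\}$. The inductive hypothesis, combined with Theorem~\ref{gap.suff} applied to all closed surfaces of strictly lower complexity (the base cases $\Sph^2$, $\RP^2$, $\T^2$, and the Klein bottle being classical), yields a $\bar\lambda_1$-maximizing metric $g_{\max}$ on $M$. By the Nadirashvili--El~Soufi--Ilias characterization of critical metrics, $g_{\max}$ is represented by a weakly conformal map $\Phi\colon M\to\Sph^n$ built from first eigenfunctions and obeying the pointwise identity $|d\Phi|_{g_{\max}}^2\equiv\lambda_1(g_{\max})$ off the conical locus.

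The next step is to construct a family of test conformal classes on $M\#N$ by attaching a crude degenerating model piece of conformal size $\eps>0$: in the handle case, a flat cylinder joining small disks around points $p,q\in M$, and in the cross-cap case, a single small disk around $p\in M$ whose boundary circle is collapsed to $\RP^2$ via the antipodal identification. Applying the key estimate
\[
\Lambda_1(M\#N,[g'_\eps])\;\le\;\Lambda_1(M\#N)\;=\;\Lambda_1(M)\;=\;\bar\lambda_1(g_{\max}),
\]
together with the conformal maximization theory of~\cite{Petrides1,KSjems}, one obtains for each $\eps$ a conformal maximizer $h_\eps\in[g'_\eps]$ together with a weakly conformal first-eigenmap $F_\eps\colon(M\#N,h_\eps)\to\Sph^{n_\eps}$ that is uniformly almost-extremal. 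The core refinement of~\cite{KKMS} then enters in extracting a stationarity condition on $F_\eps$ with respect to variations of the attachment point(s) and of the conformal parameter on the glued model piece.

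As $\eps\to 0$, the handle or cross-cap degenerates and $F_\eps$ should converge away from the degeneration locus to a weakly conformal first-eigenmap $\Phi_\infty\colon M\to\Sph^n$ for $g_{\max}$, inheriting a local constraint at $p$ encoded by the topology of $N$. In the handle case, letting the two attachment points coalesce ($q\to p$) recovers precisely the mechanism of~\cite{PetridesNew}: the limit map must satisfy $d\Phi_\infty(p)=0$, violating $|d\Phi_\infty|^2\equiv\lambda_1(g_{\max})>0$. In the cross-cap case, the antipodal identification on the shrinking boundary circle should force $\Phi_\infty$ to be even in a conformal coordinate $z$ centered at $p$, i.e.\ $\Phi_\infty(z)=\Phi_\infty(-z)$ to all orders; the real-analyticity of eigenfunctions then makes all odd-order Taylor coefficients vanish and in particular yields $d\Phi_\infty(p)=0$, contradicting the same identity.

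The main obstacle is the cross-cap case. Only a single attachment point is available, the glued piece is nonorientable, and the orientation-preserving two-point balancing mechanism of~\cite{PetridesNew} is therefore unavailable; one must instead combine variations of $p$ within $M$ with variations of the conformal structure of the attached $\RP^2$ to extract enough stationarity to force the antipodal symmetry in the limit. Simultaneously, one must rule out nontrivial bubbling in the shrinking cross-cap region in order to guarantee both that no energy escapes and that $\Phi_\infty$ is a genuine \emph{first} eigenfunction of $g_{\max}$, rather than a weak limit of lower rank or contaminated by contributions at higher eigenvalues. Making this balancing and no-bubble analysis quantitative and uniform in $\eps$ is where the bulk of the technical work must lie.
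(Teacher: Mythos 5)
There is a genuine gap: the proposal is a program statement, not a proof, and the missing steps are exactly where the paper's main contribution lies.

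Concretely, your plan is to obtain, for each $\eps$, a conformal maximizer $h_\eps\in[g'_\eps]$ and a weakly conformal first-eigenmap $F_\eps$ on $M\# N$, extract ``stationarity'' with respect to variations of the attachment data, and pass to a limit forcing $d\Phi_\infty(p)=0$. In the handle case you defer entirely to the mechanism of~\cite{PetridesNew}, but that mechanism rests on an Ekeland-principle argument applied to a carefully chosen maximizing sequence; you do not re-derive it, nor say how to avoid it. In the cross-cap case you explicitly flag that the needed balancing, no-bubble analysis, and extraction of antipodal symmetry are open, and that ``making this quantitative and uniform in $\eps$ is where the bulk of the technical work must lie.'' That is precisely the content of the theorem, so the proposal does not close the argument. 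Moreover the claim that the antipodal identification on the shrinking boundary circle forces the limit eigenmap to be even \emph{to all orders} (so that real-analyticity kills every odd Taylor coefficient) is unsupported and stronger than what can be extracted from the construction; nothing in the gluing forces cancellation beyond the scale $\eps$, and the limiting map will in general contain odd modes away from $p$.

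The paper's actual route is substantially different and sidesteps the difficulties you name. It does not take conformal maximizers at all, and therefore has no bubbling to rule out. Instead it works with the $L^\infty$ glued conformal class $[g'_{p,\eps,L}]$ directly and invokes Proposition~\ref{balanced.f} (the min-max/balancing consequence of \cite{KSjems, KNPS}) to produce, for a functional $\beta(u)=\int_M T(u)\,dv_{g_0}$ built from a \emph{refined extension operator} $T$, a sphere-valued map $F$ with $\int|dF|^2\le\Lambda_1(M',[g'])$ and $\int_M T(F)\,dv_{g_0}=0$. The refined operator $K$ of Lemmas~\ref{ext.bds} and~\ref{hand.ext.bds} is the key new device: it extends the odd part (with respect to the rotation $(z,t)\mapsto(-z,t)$ on the M\"obius band, or the reflection $\rho$ on the cylinder) conformally with \emph{zero} energy loss and with $K(u_O)\equiv 0$ on an inner disk $D_{e^{-L}\eps}(p)$, while the even part is harmonically extended with a $(1+Ce^{-2L})$ loss. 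Fixing $L=L_0$ large but uniform in $\eps$ and running the quadratic-form estimates of Lemma~\ref{cap.grad.small} / Lemma~\ref{hand.grad.small}, one shows $\|\hat F-\Phi\|_{W^{1,2}}^2\lesssim\eps^3$ (respectively $\eps^3|\log\eps|$), so the vanishing of $\hat F_O$ on $D_{e^{-L_0}\eps}(p)$ together with uniform $C^3$ bounds on $\Phi$ gives $|d\Phi(p)|^2\lesssim\eps$ (resp.\ $\eps|\log\eps|$). One never needs $\Phi$ conformal, nor any stationarity in the attachment data; the contradiction with $|d\Phi|^2_{g_1}=\lambda_1 f$ follows upon sending $\eps\to0$. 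In the handle case the paper also attaches at $q_\eps=p+\sqrt\eps\,v$ with a \emph{maximally twisted} map (reflection $A_v$), which is what lets it control $d\Phi(p)\cdot v^\perp$ as well as $d\Phi(p)\cdot v$; your proposal does not mention this twist, and without it the estimate in the $v^\perp$ direction fails.

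Two smaller but real issues: you appeal to Nadirashvili--El~Soufi--Ilias to assert $|d\Phi|^2\equiv\lambda_1$ for the inductively constructed maximizer, but that metric is only smooth up to conical singularities where $f$ vanishes, so the identity holds as $|d\Phi|^2_{g_1}=\lambda_1 f$ and one must choose a point $p$ with $f(p)>0$; and your induction should be phrased on Euler characteristic rather than ``topological complexity'' to ensure the hypothesis $\Lambda_1(M\# N)=\Lambda_1(M)$ really lets you apply the inductive case to $M$.
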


In both the orientable and nonorientable case, assuming that the desired strict monotonicity fails, the key idea is to upgrade the $0$-th order estimates used to prove the identity $\Phi(p)=\Phi(q)$ for the limiting eigenmap $\Phi$ in~\cite{KKMS} to \emph{first-order estimates} at a point of attachment, showing directly the existence of a sphere-valued eigenmap with $d\Phi(p)=0$, giving a contradiction. To this end, an essential new ingredient is the introduction of refined extension operators
from the M\"obius band and cylinder into the disk that distinguish between even and odd functions with respect to suitable symmetries, replacing the simple harmonic extension used in~\cite{KKMS, PetridesNew}. When attaching cross-caps, this allows us in particular to remove certain odd Fourier modes from key estimates in small disks near the point of attachment, which together with the techniques of~\cite{KKMS} and an elementary bootstrapping argument allows us to deduce the desired vanishing $d\Phi(p)=0$ for a limiting sphere-valued eigenmap. To mimic this argument for handle attachment, we fix a point $p$ and attach handles of width $\epsilon$ at a point $q$ of distance $\sqrt{\epsilon}$ from $p$, with a \emph{maximally twisted} attaching map at the boundary circles. In the final argument, $0$-th order estimates refining those of~\cite{KKMS} lead to the vanishing of $d\Phi(p)$ in the $q-p$ direction, while the extra 180$^{\circ}$ twist in the attaching map and the properties of the refined extension operator allow us to implement a bootstrapping argument similar to that used in the cross-cap case, to conclude that $d\Phi(p)$ vanishes in the direction perpendicular to $q-p$ as well.

With the existence theory for the $\bar{\lambda}_1$-maximization problem settled, it is natural to ask whether the techniques of the present paper generalize to higher eigenvalues and other related optimization problems. Regarding higher eigenvalues, first note that the analog of Theorem~\ref{gap.suff} does not hold in general, as it is possible for a $\bar{\lambda}_k$-maximizing sequence to degenerate to a disconnected union of extremal geometries for lower eigenvalues, as is known to occur for $\mathbb{S}^2$ and $\mathbb{RP}^2$~\cite{KNPP1,KRP2}. Nonetheless, strict topological monotonicity for the supremum $\Lambda_k(M)$ of $\bar{\lambda}_k(M,g)$ would allow one to rule out some degenerations for maximizing sequences, and in the case of oriented surfaces, the results of~\cite{PetridesNew} show that strict monotonicity under handle attachment continues to hold in this setting. 

Ultimately, we expect that a variant of the techniques in the present paper can be employed to prove strict monotonicity of $\Lambda_k(M)$ under cross-cap attachment as well, but for the moment we are limited to the first and perhaps second eigenvalues. This limitation comes from the essential role in our estimates played by Proposition \ref{balanced.f} below, which relies in turn on the characterization of conformal suprema for $\bar{\lambda}_1$ via certain min-max energies in~\cite{KSjems}. While~\cite{KSjems} gives an analog of this result for $\bar{\lambda}_2$, the problem of obtaining analogous min-max characterizations for the conformal suprema of $\bar{\lambda}_k$--and in particular, the analog of Proposition \ref{balanced.f}--remains open at present for $k\geq 3$.

Given that the techniques of~\cite{KKMS} and~\cite{PetridesNew} also apply in certain cases to the Steklov optimization problem considered in ~\cite{FSinvent}, one might also ask whether the methods of the present paper carry over to that setting, to complete the existence theory for $\bar{\sigma}_1$-maximizing metrics on compact surfaces with boundary. To this end, we note that the problem left open by~\cite{KKMS, PetridesNew} is that of proving strict monotonicity of the supremum $\Sigma_1(N)=\sup \bar{\sigma}_1(N,g)$ under attachment of strips at pairs of \emph{distinct} components of $\partial N$. Unfortunately, this problem does not seem amenable to a straightforward adaptation of the methods of Section 4 below, which rely on the attachment of handles at pairs of points which are arbitrarily close to one another. Nonetheless, we are optimistic that a further refinement of the ideas in~\cite{KKMS, PetridesNew}, and the present paper may lead to the resolution of the existence problem for $\bar{\sigma}_1$- maximizing metrics in full generality in the near future.

\subsection*{Acknowledgements} The research of D. S. is supported by NSF grant DMS 2404992 and the Simons Foundation award MPS-TSM-00007693.

\section{Conformal classes on connected sums from singular gluing constructions}\label{glue}

Given a pair of compact Riemannian surfaces $(N^2,g)$ and $(\Gamma^2,h)$ with nonempty boundaries, and a diffeomorphism
$$
\phi\colon \partial \Gamma\to \partial N,
$$
consider the closed surface 
$$
M'=N\cup_{\phi}\Gamma
$$
given by identifying $\partial N$ and $\partial\Gamma$ via $\phi$, equipped with the $L^{\infty}$ metric $g'$ for which $g'|_N=g$ and $g'|_{\Gamma}=h$. 

Metrics of the form $g'$ are often used as test metrics for the $\bar{\lambda}_1$-maximization problem on connected sums, where $(N,g)$ is the complement of one or two disks on a closed surface $M$ and $(\Gamma,h)$ is a flat M\"obius band or a cylinder, see e.g.~\cite{MS1}. In~\cite{KKMS} and in the proof of Theorem~\ref{gap.thm} below, rather than using the metrics $g'$ themselves as test metrics, we work at the level of conformal classes, through the estimate
\begin{equation}\label{cf.bd}
\Lambda_1(M',[g'])\leq \Lambda_1(M),
\end{equation}
where $\Lambda_1(M',[g'])$ denotes the supremum of all smooth metrics in the conformal class associated to $g'$. 

In either case, to justify these arguments, it is important to observe that the conformal class $[g']$ is indeed well-defined: namely, $g'$ can be written in the form $\rho g_0$ for a smooth metric $g_0$ on $M'$ and an $L^{\infty}$ function $\rho\colon M'\to [0,\infty)$. This is a straightforward exercise in the geometry of surfaces, but we include details for the sake of completeness.

\begin{lemma}\label{straight.lem}
For the singular glued metric $g'$ on $M'$ defined as above, there is a smooth metric $g_0$ on $M'$ and a positive $L^{\infty}$ function $\rho: M'\to (0,\infty)$ with respect to which $g'=\rho g_0$. 
\end{lemma}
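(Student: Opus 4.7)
Since $g'$ is smooth on $M' \setminus C$, where $C = \partial N = \partial\Gamma \subset M'$ denotes the gluing circle, my plan is to reduce to a local construction in a neighborhood of $C$: outside such a neighborhood one simply takes $g_0 = g'$ and $\rho = 1$, so the real task is to construct $g_0$ and $\rho$ inside a small collar around $C$.

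Near $C$, I would exhibit $g'$ in isothermal-type coordinates adapted to both sides. First, invoking the existence of isothermal coordinates for smooth Riemannian surfaces with smooth boundary, I obtain conformal diffeomorphisms $\Psi_N\colon [0,\epsilon)\times S^1 \to U_N \subset N$ and $\Psi_\Gamma\colon (-\epsilon,0]\times S^1 \to U_\Gamma \subset \Gamma$ satisfying $\Psi_N^* g = e^{2u_N}(dr^2 + d\theta^2)$ and $\Psi_\Gamma^* h = e^{2u_\Gamma}(dr^2 + d\theta^2)$, with $u_N, u_\Gamma$ smooth. The two induced boundary parametrizations of $C$ differ from the gluing $\phi$ by a smooth diffeomorphism $\tau\colon S^1 \to S^1$, which I absorb via a smooth extension $\tilde\tau(r,\theta) = (r,\tau_r(\theta))$ of the $\Gamma$-side collar, with $\tau_0 = \tau$ and $\tau_r = \mathrm{id}$ for $r$ near $-\epsilon$. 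Setting $\alpha = \Psi_N$ on $\{r \geq 0\}$ and $\alpha = \Psi_\Gamma \circ \tilde\tau^{-1}$ on $\{r \leq 0\}$ produces a smooth chart $\alpha\colon (-\epsilon,\epsilon) \times S^1 \to V \subset M'$ compatible with the smooth structure induced on $M'$ by $\phi$, in which $\alpha^* g'$ is smooth on each side of $\{r = 0\}$, though generally with a jump discontinuity across it when $\tau$ is not a rotation.

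The final step is to produce $g_0$ as a smooth Riemannian metric pointwise conformal to $\alpha^*g'$. Writing $\alpha^*g' = e^{2u}|dz + \mu\, d\bar z|^2$ for $z = r + i\theta$, the Beltrami coefficient $\mu$ is $L^\infty$ with $\|\mu\|_{L^\infty} < 1$; it vanishes identically on $\{r > 0\}$ and is smooth on $\{r < 0\}$. By the classical theory of the Beltrami equation, there exists a quasiconformal homeomorphism $F$ of $V$ solving $\partial_{\bar z} F = \mu\, \partial_z F$, smooth on each side of $\{r = 0\}$ by interior elliptic regularity. In the coordinates $(X,Y) = F$ the metric $\alpha^*g'$ is pointwise conformal to $dX^2 + dY^2$, so taking $g_0$ to be the pullback of this flat metric yields the desired smooth representative, with $\rho = g'/g_0 \in L^\infty$ since both $g'$ and $g_0$ are bounded above and below. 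The main obstacle is verifying that $g_0$ extends smoothly across $\{r=0\}$ as a metric on $M'$ in the smooth structure induced by $\phi$, not merely on each side separately; for smooth gluings $\phi$ this is guaranteed by the classical smoothness of conformal welding, which produces a $C^\infty$ conformal structure on $M'$ compatible with the natural smooth structure.
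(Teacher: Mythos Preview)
Your route is substantially heavier than the paper's, and the explicit Beltrami construction you sketch does not by itself close the gap you identify at the end. In your chart $\alpha$, the coefficient $\mu$ has a genuine jump across $\{r=0\}$ whenever $\tau$ is not a rotation, and a direct Cauchy--Riemann computation shows the normalizing map $F$---while smooth on each closed half---fails to be $C^1$ across the seam in the $\alpha$-coordinates: writing $F|_{\{r<0\}}=H\circ\tilde\tau^{-1}$ with $H$ holomorphic, one finds $\partial_r F(0^-,\theta)=\frac{b-i}{(\tau^{-1})'(\theta)}\,\partial_\theta F(0,\theta)$ (with $b$ real, depending on $\tilde\tau$) versus $\partial_r F(0^+,\theta)=-i\,\partial_\theta F(0,\theta)$, and these match only when $(\tau^{-1})'\equiv 1$. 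So $F^*(dX^2+dY^2)$ is not a smooth metric in the $\alpha$-chart, and you are forced to invoke smooth conformal welding as a black box for the crux. That citation is legitimate, but it renders the Beltrami step redundant: the smooth-welding theorem already asserts precisely that the glued surface carries a $C^\infty$ conformal structure compatible with the gluing, which is the content of the lemma.

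The paper avoids all of this with an elementary idea: rather than seeking isothermal coordinates for the glued metric, conformally modify $g$ and $h$ \emph{separately} so the pieces glue isometrically. Using Fermi coordinates, one finds $f_1\in C^\infty(N)$ making $(N,f_1 g)$ a flat product cylinder $\partial N\times[0,\epsilon)$ near the boundary. Since every diffeomorphism of $1$-manifolds is conformal, a further conformal factor on $\Gamma$ turns $\phi$ into an isometry of boundary circles; one more Fermi step then produces $f_3\in C^\infty(\Gamma)$ so that $(\Gamma,f_3 h)$ is likewise cylindrical near $\partial\Gamma$ with $\phi$ still an isometry. Two flat cylinders glued isometrically along their boundary circles form a smooth flat cylinder, so the resulting $g_0$ is manifestly smooth on $M'$, and $\rho=1/f_1$ on $N$, $\rho=1/f_3$ on $\Gamma$ finishes the proof---no Beltrami equation, no welding theorem, only Fermi coordinates and the trivial observation that diffeomorphisms in dimension one are conformal.
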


\begin{proof}
On a small tubular neighborhood $U\supset\partial N$ of the boundary in $(N,g)$, writing the metric in Fermi coordinates, it is easy to see that there exists a positive smooth function $\psi: U\to (0,\infty)$ depending on the distance to $\partial N$ such that $(U,\psi g)$ is isometric to the cylinder $\partial N\times [0,\epsilon)$ for some $\epsilon>0$. Fixing a nonnegative cutoff function $\chi \in C_c^{\infty}(U)$ such that $\chi\equiv 1$ in a neighborhood of $\partial N$, we then set $f_1:=\chi \psi+(1-\chi)\in C^{\infty}(N,\mathbb{R}_{>0})$. In this way, we get a conformal metric $(N,f_1g)$ isometric to a cylinder over $\partial N$ in a neighborhood of the boundary. 

Next, note that diffeomorphisms between one-dimesional manifolds is always conformal.s
Therefore it is straightforward to find a smooth conformal factor $f_2\colon \Gamma\to (0,\infty)$ such that $\phi\colon \partial \Gamma\to \partial N$ is an isometry with respect to the metric $f_2h$ on $\Gamma$ and the metric $f_1g$ on $N$. Starting from this conformal metric $(\Gamma ,f_2h)$, we can then argue just as in the preceding paragraph to obtain finally a conformal factor $f_3: \Gamma \to (0,\infty)$ with flat cylindrical geometry near each boundary component, and such that $\phi\colon \partial \Gamma\to \partial N$ is an isometry with respect to $f_3h$ and $f_1g$. 

It's then straightforward to check that the metric $g_0$ on $M'=N\cup_{\phi}\Gamma$ restricting to $f_1g$ on $N$ and $f_3h$ on $\Gamma$ is a smooth metric on $M'$, so defining $\rho\in L^{\infty}(M')$ by setting $\rho=\frac{1}{f_1}$ on $N$ and $\rho=\frac{1}{f_3}$ on $\Gamma$ completes the proof.

\end{proof}

Though we will not need it for our arguments, we note that the use of $g'$ as a test metric for the $\bar{\lambda}_1$-maximization problem on $M'$ can then easily be justified by combining Lemma \ref{straight.lem} with the results of~\cite{Kok.var, KSjems, GKL}. Namely, recall that for any conformal class $[g_0]$ on $M'$ and any nontrivial Radon measure $\mu$ on $M'$, we can define the first eigenvalue
$$
\lambda_1([g_0],\mu):=\inf\left\{\left.\frac{\|df\|_{L^2(M',[g_0])}^2}{\|f\|_{L^2(\mu)}^2}\right| \int f d\mu=0\text{ and }f\neq 0\right\}
$$
and its normalization
$$
\bar{\lambda}_1([g_0],\mu):=\mu(M')\lambda_1([g_0],\mu).
$$
By Lemma \ref{straight.lem}, one can then interpret $\bar{\lambda}_1(M',g')$ as
$$
\bar{\lambda}_1(M',g')=\bar{\lambda}_1([g_0],\rho dv_{g_0}),
$$
and results of~\cite{Kok.var, KSjems, GKL} imply that $\bar{\lambda}_1(M',g')\leq \Lambda_1(M',[g_0])$. 

More generally,~\cite[Theorem 1.4]{KSjems} shows that $\bar{\lambda}_1([g_0],\mu)\leq \Lambda_1(M,[g_0])$ whenever $\mu$ belongs to a large class of measures for which the natural map $C^{\infty}(M')\to L^2(\mu)$ extends to a compact map $W^{1,2}(M')\to L^2(\mu)$. In fact, this statement can be extended to any measure defining a bounded linear functional on $W^{1,2}(M')$, by an application of the following useful proposition, which will also play an important role in the proofs of Theorem~\ref{gap.thm}.

\begin{proposition}[cf. Proposition 3.1 in~\cite{KNPS}]
\label{balanced.f}
Given a conformal class $[g]$ on $M'$, there exists an integer $n=n([g])\in \mathbb{N}$ such that for every $\beta\in W^{1,2}(M')^*$, there exists a map $F\in W^{1,2}(M',\mathbb{S}^n)$ such that 
$$
\beta(F)=0\in \mathbb{R}^{n+1}
$$
and
$$
\int |dF|_{g}^2\,dv_{g}\leq \Lambda_1(M,[g]).
$$
\end{proposition}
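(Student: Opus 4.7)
The plan is to combine the characterization of $\Lambda_1(M,[g])$ through sphere-valued eigenmaps from~\cite{KSjems} with a Hersch-style topological balancing argument to enforce the constraint $\beta(F)=0$.

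The first step is to invoke the min-max characterization of $\Lambda_1(M,[g])$ from~\cite[Theorem 1.4]{KSjems}: there exists an integer $n=n([g])$ such that for each $\epsilon>0$ there is a test map $F_\epsilon\in W^{1,2}(M',\mathbb{S}^n)$, arising as a (near-)extremal sphere-valued first eigenmap of a smooth metric $g_\epsilon\in[g]$, whose Dirichlet energy satisfies
\[
E(F_\epsilon):=\int_{M'}|dF_\epsilon|_g^2\,dv_g\leq \Lambda_1(M,[g])+\epsilon.
\]
I would then dispose of the trivial case $\beta(\mathbf{1})=0$ by noting that any constant map $F\equiv p\in\mathbb{S}^n$ satisfies $\beta(F)=p\,\beta(\mathbf{1})=0$ and $E(F)=0$. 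So assume henceforth that $\beta(\mathbf{1})\neq 0$.

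The main construction is the Hersch-type degree argument. Parametrize the effective Möbius group $\mathrm{Conf}(\mathbb{S}^n)/O(n+1)$ by the open unit ball $B^{n+1}$, with $\phi_a\colon\mathbb{S}^n\to\mathbb{S}^n$ denoting the conformal automorphism sending the origin to $a$. Consider the map
\[
\Psi_\epsilon\colon B^{n+1}\to\mathbb{R}^{n+1},\qquad a\mapsto \beta(\phi_a\circ F_\epsilon),
\]
which is continuous on compact subsets of $B^{n+1}$. As $|a|\nearrow 1$, the composition $\phi_a\circ F_\epsilon$ concentrates pointwise to the constant $a/|a|$, so for small $\delta>0$ the restriction of $\Psi_\epsilon$ to $\partial B^{n+1}(1-\delta)$ is close to $a\mapsto \beta(\mathbf{1})\cdot a/|a|$, a map of nonzero topological degree. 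Brouwer degree theory then yields some $a_\epsilon\in B^{n+1}$ with $\Psi_\epsilon(a_\epsilon)=0$, giving a balanced map $F_\epsilon':=\phi_{a_\epsilon}\circ F_\epsilon$ satisfying $\beta(F_\epsilon')=0$.

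The hard part will be the energy bound on $F_\epsilon'$: unlike in the classical Hersch argument for $\mathbb{S}^2$-valued maps on surfaces, Dirichlet energy is not automatically preserved under post-composition by Möbius transformations of $\mathbb{S}^n$ for $n\geq 2$ on generic maps, and indeed $E(\phi_a\circ F_\epsilon)$ can blow up as $|a|\nearrow 1$. Overcoming this relies on the special structure of the extremal first eigenmaps $F_\epsilon$ from~\cite{KSjems}---their weak conformality, or equivalently the identity $|dF_\epsilon|^2=\lambda_1 \rho_\epsilon$ for the associated conformal factor $\rho_\epsilon$---which makes the target Möbius action compatible with the energy bound and yields $E(F_\epsilon')\leq \Lambda_1(M,[g])+o(1)$. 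Finally, a weak $W^{1,2}$ subsequential limit $F$ of $F_\epsilon'$ as $\epsilon\to 0$ satisfies $\beta(F)=0$ by continuity of $\beta\in W^{1,2}(M')^*$, and $E(F)\leq\Lambda_1(M,[g])$ by weak lower semicontinuity of Dirichlet energy, completing the proof.
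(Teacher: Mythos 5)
The paper's own proof is much shorter: for $\beta$ given by integration against an admissible measure it simply cites Proposition~3.1 of~\cite{KNPS}, and the general case $\beta\in W^{-1,2}(M')$ is handled by approximating $\beta$ by smooth functionals $\beta_j$, applying the measure case, and passing to a weak $W^{1,2}$ subsequential limit. You are instead trying to re-derive the measure case from scratch via a Hersch-type degree argument, and there is a genuine gap at the step you flag as ``the hard part.''

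The problem is that the family $\{\phi_a\circ F_\epsilon\}_{a\in B^{n+1}}$ does not admit the required uniform energy bound. You correctly observe that $E(\phi_a\circ F_\epsilon)$ can blow up as $|a|\to 1$, but the claimed remedy --- weak conformality of $F_\epsilon$ --- does not close the gap. Post-composition by a conformal automorphism $\phi_a$ of $\mathbb{S}^n$ preserves weak conformality and hence the identity $E=2\,\mathrm{Area}$, but it does \emph{not} control the area: $\mathrm{Area}(\phi_a\circ F_\epsilon)$ can exceed $\mathrm{Area}(F_\epsilon)$. The Li--Yau/El~Soufi--Ilias fact that the conformal volume is attained at the original map holds for genuine $\lambda_1$-minimal immersions, not for near-extremal maps from a min-max construction, and in any case the error is not uniform in $a$; bubbling near $|a|=1$ is exactly the obstruction. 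The correct mechanism, used in the proof of~\cite[Proposition~3.1]{KNPS}, is to run the Brouwer degree argument on the \emph{min-max sweepout family} $a\mapsto F_a$ from~\cite{KSjems} directly: the min-max structure gives a sweepout with $\sup_a E(F_a)\le \tfrac12\Lambda_1(M',[g])+\delta$, so whichever $a_0$ the degree argument produces automatically satisfies the energy bound. Your argument replaces this family with Möbius translates of a single map, which loses precisely the uniform energy control the degree argument needs.

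A second, smaller issue: even granting the energy bound, your Brouwer-degree step requires $\Psi_\epsilon(a)=\beta(\phi_a\circ F_\epsilon)$ to approach $\beta(\mathbf{1})\,a/|a|$ as $|a|\to 1$. The convergence $\phi_a\circ F_\epsilon\to a/|a|$ is pointwise a.e.\ (and bounded in $L^\infty$), which suffices when $\beta$ is integration against a measure, but not for an arbitrary $\beta\in W^{-1,2}(M')$, since $\|\phi_a\circ F_\epsilon\|_{W^{1,2}}$ is unbounded. So even a corrected version of your argument only yields the measure case, and you would still need the density-and-weak-limit step that constitutes the second half of the paper's proof.
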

\begin{proof}
For $\beta\in W^{1,2}(M')^*=W^{-1,2}(M')$ defined by integration against an admissible measure, this is an immediate consequence of [KNPS Proposition 3.1] and its proof. Given an arbitrary $\beta\in W^{-1,2}(M')$, we can find a sequence $\beta_j\in W^{1,2}(M')^*$ given by integration against smooth functions such that
$$
\lim_{j\to\infty}\|\beta_j-\beta\|_{W^{-1,2}(M')}=0,
$$
and since the desired statement holds in the smooth case, there exists an associated sequence of maps $F_j\in W^{1,2}(M',\mathbb{S}^n)$ satisfying
$$
\beta_j(F_j)=0\in \mathbb{R}^{n+1}
$$
and
$$
\int |dF_j|_g^2\,dv_g\leq \Lambda_1(M',[g]).
$$
Passing to a subsequence, we can then extract a limiting map $F\in W^{1,2}(M',\Sph^n)$ such that $F_j$ converges to $F$ weakly in $W^{1,2}(M',\mathbb{S}^n)$. The bound
$$
\int |dF|_g^2\,dv_g\leq \Lambda_1(M',[g])
$$
then follows immediately from lower semicontinuity of energy, while the weak convergence $F_j\rightharpoonup F$ in $W^{1,2}$ and strong convergence $\beta_j\to \beta$ in $W^{-1,2}$ give
$$
\beta(F)=\lim_{j\to\infty}\beta(F_j)=\lim_{j\to\infty}\beta_j(F_j)=0,
$$
so $F$ provides the desired map.
\end{proof}

\section{Attaching cross-caps}

In this section we prove the following gap result, providing the main ingredient for the nonorientable part of Theorem \ref{gap.thm}.

\begin{theorem}\label{cap.gap}
Let $M$ be a closed surface, and let $g_0$ be a (possibly degenerate) metric on $M$ of the form $g_0=fg_1$, where $g_1$ is a smooth metric and $f$ is a smooth, nonnegative function with $\#f^{-1}\{0\}<\infty$. Then
$$
\bar{\lambda}_1(M,g_0)=\bar{\lambda}_1([g_1],f\,dv_{g_1})<\Lambda_1(M\#\mathbb{RP}^2).
$$
\end{theorem}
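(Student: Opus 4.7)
The plan is to argue by contradiction: assume
$$
\bar{\lambda}_1([g_1], f\,dv_{g_1}) \geq \Lambda_1(M\#\mathbb{RP}^2),
$$
and produce a first eigenmap $\Phi\colon M \to \Sph^n$ for the weighted Laplacian $\Delta_{[g_1], f\,dv_{g_1}}$ whose differential vanishes at some point $p \in M$ with $f(p) > 0$. Such extremal eigenmaps are weakly conformal in the sense that $|d\Phi|^2_{g_1} = \lambda_1 f$ almost everywhere, so $d\Phi(p) = 0$ together with $f(p) > 0$ yields the contradiction $\lambda_1 f(p) = 0$.

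First, I would fix a point $p$ with $f(p) > 0$ and isothermal coordinates for $g_1$ centered there. For small $\epsilon > 0$, I would form a family of singular glued metrics $g'_\epsilon$ on $M' := M\#\mathbb{RP}^2$ by excising the conformal disk $D_\epsilon$ around $p$ and gluing in a flat M\"obius band of width comparable to $\epsilon$. Lemma~\ref{straight.lem} and the estimate (\ref{cf.bd}) yield $\Lambda_1(M', [g'_\epsilon]) \leq \bar{\lambda}_1([g_1], f\,dv_{g_1})$. I would then apply Proposition~\ref{balanced.f} to $[g'_\epsilon]$ with test functionals $\beta_\epsilon \in W^{1,2}(M')^*$ comprising both (i) a balancing against $f\,dv_{g_1}$, chosen so that weak $W^{1,2}$ limits of the resulting maps are first eigenmaps for $\Delta_{[g_1], f\,dv_{g_1}}$, and (ii) an explicit first-order moment condition at $p$ whose vanishing in the limit will give $d\Phi(p) = 0$. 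This produces maps $F_\epsilon\colon M' \to \Sph^n$ with $\int |dF_\epsilon|^2\,dv_{g'_\epsilon} \leq \bar{\lambda}_1([g_1], f\,dv_{g_1})$ and $\beta_\epsilon(F_\epsilon) = 0$.

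Following the scheme of~\cite{KKMS}, I would pass to the limit $\epsilon \to 0$: the restrictions $F_\epsilon|_M$ converge weakly in $W^{1,2}$ to some $\Phi\colon M \to \Sph^n$, which by the balancing built into (i) is a first eigenmap for the weighted operator. The new ingredient needed to conclude $d\Phi(p) = 0$ is a refined extension operator from the attaching circle $\partial D_\epsilon$ into the M\"obius cross-cap that, after Fourier decomposition of the boundary trace, separates "even" modes (those descending to the M\"obius quotient of the orientation double cover) from "odd" modes, which encode the first-order Taylor data at $p$. The refined extension dampens the odd modes more aggressively than ordinary harmonic extension into a disk, producing an energy estimate with an improvement on exactly those modes controlling $d\Phi(p)$. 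Combined with the first-order moment carried by $\beta_\epsilon$, a bootstrapping argument analogous to but finer than the one in~\cite{KKMS} closes, forcing the odd modes of $F_\epsilon$ at $p$ to vanish in the limit, hence $d\Phi(p) = 0$.

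The main obstacle is closing the bootstrap. The existence of the refined extension is guided by the $\mathbb{Z}/2$ antipodal symmetry exchanging sheets of the cylinder covering the M\"obius band, but to make the iteration close one must obtain quantitative Dirichlet energy bounds for the refined extension that precisely dominate the error terms produced by the first-order moment in $\beta_\epsilon$, while leaving enough slack to simultaneously accommodate the zeroth-order balancing of (i) and the error terms coming from the finite cross-cap scale $\epsilon$. Calibrating these quantities against each other, and ensuring the gain on odd modes survives the passage through the $\epsilon$-dependent conformal geometry, is the delicate core of the argument.
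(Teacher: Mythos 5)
Your broad strategy — glue a flat M\"obius band at $p$ for a family of scales $\epsilon$, invoke Proposition~\ref{balanced.f} to produce balanced sphere-valued maps $F_\epsilon$ on $M'$ with energy $\leq\bar\lambda_1(M,g_0)$, extend them back into the excised disk via a refined even/odd extension, and pass to the limit to produce an eigenmap $\Phi$ with $d\Phi(p)=0$ — coincides with the paper's. However, the two specific mechanisms you propose for extracting $d\Phi(p)=0$ both deviate from the paper, and neither as stated would close.

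First, you want to bake an ``explicit first-order moment condition at $p$'' into the functional $\beta_\epsilon$. Proposition~\ref{balanced.f} furnishes balancing against a \emph{single} scalar functional $\beta\in W^{1,2}(M')^*$ (applied componentwise to the $\mathbb{R}^{n+1}$-valued map), not two independent ones; you cannot simply stack a zeroth-order balancing and a first-moment constraint without changing the min-max construction behind it. Even ignoring this, a first moment over $D_\epsilon(p)$ scales like $\epsilon^4 \,d\Phi(p)$ while the available $W^{1,2}$ error $\|\hat F-\Phi\|$ in the paper's argument is only $O(\epsilon^{3/2})$; a direct moment condition therefore loses too many powers of $\epsilon$ to give any control on $d\Phi(p)$. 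The paper uses only the zeroth-order balancing $\beta(u)=\int_M T(u)\,dv_{g_0}$, which is exactly what is needed to put $\hat F$ quantitatively close to the first eigenspace via the quadratic form $Q_{g_0}$.

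Second, your description of the refined extension as ``dampening the odd modes more aggressively'' misses the structural point. In the paper's Lemma~\ref{ext.bds}, odd functions on $\Gamma_L$ (with respect to $z\mapsto -z$) automatically have zero trace on the core circle $\mathbb S^1\times\{0\}/\!\sim$ because of the M\"obius identification $(z,0)\sim(-z,0)$; they are then transplanted \emph{conformally} (hence with equality of Dirichlet energy, not a gain) onto the annulus $D_\epsilon(p)\setminus D_{e^{-L}\epsilon}(p)$ and extended \emph{by zero} into the inner disk $D_{e^{-L}\epsilon}(p)$. The consequence used in the proof is that $\hat F_O\equiv 0$ on $D_{e^{-L}\epsilon}(p)$, so once the bootstrap gives $\|\hat F-\Phi\|_{W^{1,2}}^2\lesssim\epsilon^3$, one deduces $\|d\Phi_O\|^2_{L^2(D_{e^{-L}\epsilon}(p))}\lesssim\epsilon^3$, and since $\Phi_O$ near $p$ is $d\Phi(p)\cdot z+O(|z|^3)$, this forces $|d\Phi(p)|^2\lesssim\epsilon$. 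The bootstrap itself works only because, by Lemma~\ref{ext.bds}, the even part of the extension costs merely a factor $1+Ce^{-2L}$ in energy and $\|d\Phi_E\|^2_{L^2(D_\epsilon(p))}=O(\epsilon^4)$ by the $C^3$ bound, so that for $L$ large the self-referential inequality~\eqref{h1close.2} can be absorbed. In short: the mechanism is structural annihilation of odd modes on the inner disk, not a first-moment constraint, and no gain beyond energy-equality is needed on the odd modes.

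Finally, a small terminological point: ``even modes'' here are not ``those descending to the M\"obius quotient''; every Fourier mode on $\Gamma_L$ descends to the quotient, and the relevant dichotomy is the $z$-parity of the Fourier expansion, which is meaningful precisely because odd parity forces vanishing on the core circle.
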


We prove Theorem \ref{cap.gap} by identifying a family of conformal classes $[g']$ on $M'=M\#\mathbb{RP}^2$ for which the assumption $\Lambda_1(M',[g'])\leq \bar{\lambda}_1(M,g)$ leads to a contradiction. To this end, fix a point $p\in M$, and fix a reference metric $g_p\in [g_1]$ which is Euclidean in a neighborhood of $p$. For a small $\epsilon>0$, we then consider the manifold with boundary
$$N_{p,\epsilon}:=M\setminus D_{\epsilon}(p)$$
given by removing the $g_p$-geodesic disk of radius $\epsilon$ centered at $p$.

Next, denote by $\Gamma_L:=\Sph^1\times [-L,L]/\sim$ the flat M\"obius band of width $2\pi$ and length $L$, where $(z,t)\sim (-z,-t)$, and let $\phi\colon \partial \Gamma_L\to \partial N_{p,\epsilon}$ be a homothety with a scaling factor of $\epsilon$, correponding to $\phi(z,L)=\epsilon z$ under the identification $D_{\epsilon}(p)\cong D_{\epsilon}(0)$. As in Section \ref{glue}, we then define an $L^{\infty}$ metric $g'_{p,\epsilon,L}$ on 
$$
M'=N_{p,\epsilon}\cup_{\phi}\Gamma_L\approx M\#\mathbb{RP}^2
$$
such that $g'_{p,\epsilon,L}$ agrees with $g_p$ on $N_{p,\epsilon}$ and $g'_{p,\epsilon,L}$ agrees with the given flat metric on $\Gamma_L$. The main technical result of this section can then be stated as follows.

\begin{proposition}\label{cap.prop}
If $\Lambda_1(M',[g'_{p,\epsilon,L}])\leq \bar{\lambda}_1(M,g_0)$ for all $L\in (0,\infty)$ and all $\epsilon$ sufficiently small, then $f(p)=0$. 
\end{proposition}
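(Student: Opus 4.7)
The plan is to argue by contradiction. Assume $f(p)>0$. For each small $\epsilon>0$ and each $L\in(0,\infty)$, I will apply Proposition~\ref{balanced.f} to the conformal class $[g'_{p,\epsilon,L}]$ to produce a sphere-valued map $F_{\epsilon,L}\colon M'\to \Sph^n$ with energy at most $\bar{\lambda}_1(M,g_0)$, balanced against a functional $\beta_{\epsilon,L}\in W^{1,2}(M')^*$ chosen so that, in a suitable limit as $\epsilon\to 0$ with $L=L(\epsilon)$ selected carefully, the restrictions $F_{\epsilon,L}|_{N_{p,\epsilon}}$ converge weakly in $W^{1,2}$ to a nonconstant weighted first eigenmap $\Phi\colon M\to\Sph^n$ of $([g_1],f\,dv_{g_1})$, with additional control on the first Fourier modes on the attaching circle $\partial D_\epsilon(p)$. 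Since first eigenmaps of this weighted problem satisfy
\[
|d\Phi|_{g_1}^2 = \lambda_1(M,g_0)\,f \quad\text{a.e.~on $M$,}
\]
an eventual conclusion that $d\Phi(p)=0$ will contradict $f(p)>0$.

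The zeroth-order part of $\beta_{\epsilon,L}$ will be a center-of-mass functional $F\mapsto \int_M F\,f\,dv_{g_1}$, which via the Rayleigh quotient forces $\Phi$ to saturate the weighted eigenvalue (hence be a first eigenmap) and prevents $\Phi$ from being constant. The first-order part will be dual to the $n=\pm1$ Fourier modes of the boundary trace of $F_{\epsilon,L}$ on $\partial D_\epsilon(p)$; by conformal invariance of two-dimensional energy and the scale $\epsilon$ of the attaching circle, these are the modes encoding $d\Phi(p)$ in the limit. Standard $W^{1,2}$-compactness on $N_{p,\epsilon}$ together with a removable-singularity argument at $p$ (using the vanishing of energy on the shrinking disks) then gives the limit $\Phi\in W^{1,2}(M,\Sph^n)$.

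The heart of the argument is the refined extension operator $\Ecal$ from the M\"obius band (or its boundary) into the disk, designed to respect the $\Z/2$-symmetry $(z,t)\sim(-z,-t)$ on $\Gamma_L$. Writing boundary traces in Fourier modes $u=\sum a_n e^{in\theta}$, the lifts to the covering cylinder agree on both boundary circles for even $n$ but differ by a sign for odd $n$. Consequently, a simple harmonic extension of boundary data through a narrow band passes even modes cheaply while odd modes (including the $n=\pm 1$ modes controlling $d\Phi(p)$) must change sign across the core and carry substantially larger energy. The operator $\Ecal$ is tailored to this asymmetry: splicing $F_{\epsilon,L}|_{N_{p,\epsilon}}$ with $\Ecal(F_{\epsilon,L}|_{\partial D_\epsilon(p)})$ produces a comparison map on $M$ whose total energy is controlled by that of $F_{\epsilon,L}$ on $M'$, and the comparison forces the odd-mode coefficients on $\partial D_\epsilon(p)$ to vanish in the limit. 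Combined with the pinning from $\beta_{\epsilon,L}$ and a bootstrap upgrading smallness at scale $\epsilon$ to exact vanishing at $p$, this yields $d\Phi(p)=0$.

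The main obstacle I foresee is the fine construction of $\Ecal$ together with the matching design of $\beta_{\epsilon,L}$: the extension must reap enough even-mode savings to offset the suppression of odd modes while keeping total energy within the budget $\bar{\lambda}_1(M,g_0)$, so that the resulting inequality is nontrivial at all scales. A secondary issue is uniformity of the target dimension $n=n([g'_{p,\epsilon,L}])$ in Proposition~\ref{balanced.f} across the parameter family, which I would handle by embedding into a common ambient $\Sph^N$. The final bootstrap, from smallness of the $n=\pm 1$ Fourier modes on circles of radius $\epsilon$ to the actual vanishing of $d\Phi(p)$, should then follow by a conformal iteration argument, using the balancing condition to rule out nontrivial first-derivative data for $\Phi$ at $p$.
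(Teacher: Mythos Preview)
Your overall architecture—apply Proposition~\ref{balanced.f}, build a refined extension operator that treats even and odd parts of $F|_{\Gamma_L}$ differently, and conclude $d\Phi(p)=0$ to contradict $f(p)>0$—matches the paper's. But the central mechanism is inverted in your write-up, and this matters. On $\Gamma_L$, a function odd under $(z,t)\mapsto(-z,t)$ is forced by the identification $(z,0)\sim(-z,0)$ to \emph{vanish} on the core circle; the paper's operator $K$ exploits this by conformally transplanting the odd part onto the annulus $D_\epsilon\setminus D_{e^{-L}\epsilon}$ and extending by zero on the inner disk, so the odd part is extended with \emph{exactly equal} energy, not ``substantially larger'' energy. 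The energy defect $\|d\hat F\|^2_{L^2(M)}-\|dF\|^2_{L^2(M')}$ therefore involves only the even parts (equation~\eqref{t.ener.drop}), and for a smooth eigenmap $\Phi$ one has $d\Phi_E(p)=0$ automatically, giving $\|d\Phi_E\|^2_{L^2(D_\epsilon)}\leq C\epsilon^4$. This is what makes the bootstrap close; the conclusion $|d\Phi(p)|^2\leq C\epsilon$ then comes from the fact that $\hat F_O\equiv 0$ on $D_{e^{-L}\epsilon}(p)$ by construction, combined with the quantitative closeness $\|\hat F-\Phi\|_{W^{1,2}}^2\leq C\epsilon^3$.

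Two further deviations from the paper are worth flagging. First, no first-order constraints are imposed through $\beta$: the paper uses only $\beta(u)=\int_M T(u)\,dv_{g_0}$, and the gradient control is a \emph{consequence} of the extension operator's structure, not of additional balancing. Your proposal to pin the $n=\pm 1$ Fourier modes via $\beta$ is unnecessary and, as stated, it is unclear how it would interact with the energy budget. Second, the paper fixes $L=L_0$ independent of $\epsilon$ (chosen so that $Ce^{-2L_0}<\tfrac12$), rather than letting $L=L(\epsilon)$, and it obtains $\Phi_\epsilon$ by $L^2(dv_{g_0})$-projection of $\hat F$ onto the finite-dimensional first eigenspace rather than by extracting weak limits of $F_{\epsilon,L}|_{N_{p,\epsilon}}$. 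The projection route sidesteps compactness and removable-singularity issues entirely and yields uniform $C^3$ bounds on $\Phi_\epsilon$ immediately, which drive the pointwise estimates near $p$.
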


In particular, by choosing a point $p$ for which $f(p)>0$, we see that $\Lambda_1(M',[g'_{p,\epsilon,L}])>\bar{\lambda}_1(M,g_0)$ for some choice of $\epsilon$ and $L$, from which Theorem \ref{cap.gap} follows. The remainder of the section is devoted to the proof of Proposition \ref{cap.prop}.

\begin{remark}\label{eigen.basics}
Though the conformal metric $g_0=fg_1=\tilde{f}g_p$ fails to define a classical metric at points where $f$ vanishes, we remind the reader that the compactness of the map $W^{1,2}(M,g_1)\to L^2(M,dv_{g_0})$ implies the existence of a sequence $0=\lambda_0(M,g_0)<\lambda_1(M,g_0)\leq \cdots \to\infty$ of eigenvalues and an orthonormal sequence of eigenfunctions $\phi_i\in L^2(M,dv_{g_0})$ satisfying
$$
\Delta_{g_1}\phi_i=\lambda_i f \phi_i
$$
defining a Schauder basis for $W^{1,2}(M)$. Moreover, smoothness of these eigenfunctions $\phi_i$ follows easily from the smoothness of $f$ and standard elliptic regularity theory.
\end{remark}

\subsection{A refined extension operator for cross-caps}

In the arguments of~\cite{KKMS} and~\cite{PetridesNew}, a key ingredient is the following lemma, providing sharp bounds for the harmonic extension operator from the cylinder $C_L=\Sph^1\times [0,L]$ to the unit disk $\DD$.

\begin{lemma}[Lemma 8.11 in~\cite{KKMS}]
\label{hex.lem}
Let $L\geq \frac{3}{2}\log(2)$, and let 
$$
H\colon W^{1,2}(C_L)\to W^{1,2}(\mathbb{D}^2)
$$ 
be the map sending $u\in W^{1,2}(C_L)$ to the harmonic extension $H(u)=\hat{u}$ of $u(\cdot,L)$ to the unit disk $\mathbb{D}^2$. Then
$$
\|d H(u)\|_{L^2(\mathbb{D}^2)}^2\leq \left(1+Ce^{-2L}\right)\|du\|_{L^2(C_L)}^2.
$$
\end{lemma}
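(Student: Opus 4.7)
The plan is to reduce the inequality to a one-dimensional, mode-by-mode estimate by decomposing $u$ in Fourier series along the $S^1$-factor of $C_L = S^1 \times [0,L]$. Writing $u(\theta,t) = \sum_{k\in\mathbb{Z}} a_k(t) e^{ik\theta}$, the Dirichlet energy on the cylinder decouples as
$$\|du\|_{L^2(C_L)}^2 = 2\pi \sum_{k\in\mathbb{Z}} \int_0^L \bigl(|a_k'(t)|^2 + k^2 |a_k(t)|^2\bigr)\,dt,$$
while the harmonic extension of $u(\cdot,L) = \sum_k a_k(L) e^{ik\theta}$ to the unit disk is explicitly $\hat u(re^{i\theta}) = \sum_k a_k(L)\,r^{|k|}\,e^{ik\theta}$, whose Dirichlet energy is
$$\|d\hat u\|_{L^2(\mathbb{D}^2)}^2 = 2\pi \sum_{k\neq 0} |k|\,|a_k(L)|^2.$$
So it suffices to prove, mode by mode, a trace estimate bounding $|k|\,|a_k(L)|^2$ by the corresponding cylinder-energy integral with a factor of $1 + Ce^{-2L}$.

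For fixed $k\neq 0$, I would invoke the Dirichlet principle: among all $a \in W^{1,2}([0,L])$ with $a(L) = c$ and free value at $t=0$, the infimum of $\int_0^L (|a'|^2 + k^2 |a|^2)\,dt$ is achieved by the solution of $a'' = k^2 a$ with Neumann condition at $t=0$, namely $a(t) = c\cosh(|k|t)/\cosh(|k|L)$. A direct computation using $\sinh^2 + \cosh^2 = \cosh(2\cdot)$ gives the minimum energy $c^2 |k| \tanh(|k|L)$, hence the sharp one-dimensional trace inequality
$$|k|\,|a_k(L)|^2 \leq \frac{1}{\tanh(|k|L)} \int_0^L \bigl(|a_k'|^2 + k^2 |a_k|^2\bigr)\,dt.$$

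To finish, I would control the prefactor uniformly in $k$. Since $\tanh(x)^{-1} = 1 + \tfrac{2e^{-2x}}{1 - e^{-2x}}$ and $|k|\geq 1$, the hypothesis $L \geq \tfrac{3}{2}\log 2$ gives $e^{-2|k|L} \leq e^{-2L} \leq \tfrac{1}{8}$, so that $\tanh(|k|L)^{-1} \leq 1 + Ce^{-2L}$ with, e.g., $C = 16/7$. Summing the per-mode inequality over all $k\neq 0$ then yields the stated bound (the $k=0$ mode contributes nothing to $\|d\hat u\|^2$).

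Conceptually there is no real obstacle here, since the problem is essentially the classical computation that the harmonic extension of a circle-valued boundary datum to the disk has the same energy as its harmonic extension to the half-infinite cylinder, and that cutting off this half-cylinder at a large but finite length introduces only an exponentially small defect via the Neumann condition at the cut. The only points requiring care are checking that the minimizer's energy is indeed $c^2 |k|\tanh(|k|L)$ and that the threshold $L \geq \tfrac{3}{2}\log 2$ is exactly what is needed to absorb the $(1-e^{-2|k|L})^{-1}$ factor into a clean constant.
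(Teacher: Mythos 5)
Your proof is correct and complete. The Fourier decomposition along the $\mathbb{S}^1$ factor, reducing the estimate to the per-mode trace inequality $|k|\,|a_k(L)|^2 \leq \coth(|k|L)\int_0^L(|a_k'|^2+k^2|a_k|^2)\,dt$ obtained by minimizing with a free (Neumann) endpoint at $t=0$, is exactly the standard route for such estimates and matches the approach of Lemma 8.11 in \cite{KKMS}; your computation of the minimizer's energy as $c^2|k|\tanh(|k|L)$ and the uniform absorption of $\coth(|k|L)=1+\frac{2e^{-2|k|L}}{1-e^{-2|k|L}}\leq 1+\tfrac{16}{7}e^{-2L}$ for $|k|\geq 1$, $L\geq \tfrac{3}{2}\log 2$ are both correct.
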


Note that any function $u\in W^{1,2}(\Gamma_L)$ on the M\"obius band $\Gamma_L$ can be identified with a function $u\in W^{1,2}(C_L)$ satisfying $u(z,0)=u(-z,0)$, so that the corresponding harmonic extension operator $H\colon W^{1,2}(\Gamma_L)\to W^{1,2}(\mathbb{D}^2)$ likewise satisfies the estimate given in Lemma \ref{hex.lem}. 

Estimates of this form were sufficient for the results of [KKMS, Thm 8.1], but to prove Proposition~\ref{cap.prop}, we will need finer estimates, requiring a more delicate choice of extension operator $K\colon W^{1,2}(\Gamma_L)\to W^{1,2}(\mathbb{D}^2)$ tailored to the M\"obius band $\Gamma_L$. To this end, consider the orthogonal decomposition
\[
W^{1,2}(\Gamma_L)=\mathcal{E}\oplus \mathcal{O},
\]
of $W^{1,2}(\Gamma_L)$ into the even functions
$$
\mathcal{E}:=\{u\in W^{1,2}(\Gamma_L)\mid u(-z,t)=u(z,t)\}
$$
and odd functions
$$
\mathcal{O}:=\{u\in W^{1,2}(\Gamma_L)\mid u(-z,t)=-u(z,t)\}
$$
for the rotation $(z,t)\mapsto (-z,t)$. Crucially, note that for any $u\in \mathcal{O}$, since $(z,0)\sim (-z,0)$, we must have
\begin{equation}\label{odd.van}
u(z,0)=u(-z,0)=-u(z,0)=0\text{ for any }z\in \Sph^1.
\end{equation}
Letting $\pi_E\colon W^{1,2}(\Gamma_L)\to \mathcal{E}$ and $\pi_O\colon W^{1,2}(\Gamma_L)\to\mathcal{O}$ denote the obvious projections, we now build the refined extension operator as follows.

\begin{lem}\label{ext.bds}
For $L\geq \frac{3}{2}\log(2)$, there exists an operator $K\colon W^{1,2}(\Gamma_L)\to W^{1,2}(\DD^2)$ such that 
$$
K(u)(z)=u(z,L)\text{ for }z\in \Sph^1,
$$
$$
\|K(u)\|_{L^{\infty}(\DD^2)}\leq 2\|u\|_{L^{\infty}(\Gamma_L)},
$$
and writing $u_E=\pi_E(u)$ and $u_O=\pi_O(u)$, we have
\[
\|d[K(u_E)]\|_{L^2(\DD^2)}^2\leq (1+Ce^{-2L})\|d(u_E)\|_{L^2(\Gamma_L)}^2
\]
and
\[
\|d[K(u_O)]\|_{L^2(\DD^2)}^2=\|d(u_O)\|_{L^2(\Gamma_L)}^2.
\]
Moreover, $K(u_E)$ is even and $K(u_O)$ is odd with respect to the antipodal map $z\mapsto -z$ in the disk, and $K(u_O)\equiv 0$ on $D_{e^{-L}}(0)$.
\end{lem}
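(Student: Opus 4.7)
The plan is to define $K$ separately on each summand of the orthogonal decomposition $\mathcal{E} \oplus \mathcal{O}$ and extend by linearity to $K(u) = K(u_E) + K(u_O)$. On $\mathcal{E}$, I will simply take $K(u_E)$ to be the harmonic extension $H(u_E)$ of Lemma~\ref{hex.lem}. Since $u_E$ is invariant under $z \mapsto -z$, in particular $u_E(z,0) = u_E(-z,0)$, it descends to a well-defined function on the cylinder $C_L = \Sph^1 \times [0,L]$, and Lemma~\ref{hex.lem} directly gives the claimed bound $\|dK(u_E)\|_{L^2(\DD^2)}^2 \leq (1+Ce^{-2L})\|du_E\|_{L^2(\Gamma_L)}^2$. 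Evenness of $K(u_E)$ under $z \mapsto -z$ on the disk follows from uniqueness of the Dirichlet problem (the function $z \mapsto K(u_E)(-z)$ is harmonic with the same boundary trace), and the $L^\infty$ bound is just the maximum principle.

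The interesting new ingredient is the extension on $\mathcal{O}$, where I will replace the harmonic extension by a conformal one that vanishes identically on $D_{e^{-L}}(0)$. Concretely, introduce the conformal identification $\Psi\colon C_L \to A := \{e^{-L} \leq |z| \leq 1\}$ given by $\Psi(e^{i\theta}, t) = e^{t - L + i\theta}$, sending $\{t = L\}$ to $\Sph^1$ and $\{t = 0\}$ to the inner circle $\{|z| = e^{-L}\}$, and define
\[
K(u_O)(z) := \begin{cases} u_O(\Psi^{-1}(z)), & z \in A, \\ 0, & |z| \leq e^{-L}. \end{cases}
\]
The whole construction hinges on the vanishing identity \eqref{odd.van}, which says $u_O(z,0) = 0$ for all $z \in \Sph^1$; this forces the two one-sided traces of $K(u_O)$ on $\{|z| = e^{-L}\}$ to both vanish, so $K(u_O)$ genuinely lies in $W^{1,2}(\DD^2)$. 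Conformal invariance of the 2D Dirichlet energy then yields the exact equality $\|dK(u_O)\|_{L^2(\DD^2)}^2 = \|du_O\|_{L^2(C_L)}^2 = \|du_O\|_{L^2(\Gamma_L)}^2$ with no error term, while oddness of $K(u_O)$ under $z \mapsto -z$ and the $L^\infty$ bound follow immediately from the formula, using that $\Psi$ intertwines $z \mapsto -z$ on $A$ with $(z,t) \mapsto (-z,t)$ on $C_L$.

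Combining the two ingredients, the boundary-trace identity $K(u)|_{\Sph^1} = u(\cdot, L)$ is additive, and the global $L^\infty$ bound follows from $\|u_E\|_{L^\infty}, \|u_O\|_{L^\infty} \leq \|u\|_{L^\infty(\Gamma_L)}$ together with the triangle inequality, yielding the factor of $2$ in the statement. No single step is delicate; the only real idea is to use a conformal extension on the odd modes, which simultaneously removes the $Ce^{-2L}$ error term and arranges vanishing on $D_{e^{-L}}(0)$. These two refinements are precisely what the rest of Section~3 is designed to exploit in order to upgrade the zeroth-order estimates of \cite{KKMS} to first-order control at the point of cross-cap attachment.
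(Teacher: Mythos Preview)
Your proposal is correct and essentially identical to the paper's proof: the paper defines $K(u)=H(u_E)+K(u_O)$ with $K(u_O)=u_O\circ F$ on the annulus $A_L=\DD\setminus D_{e^{-L}}(0)$ and $0$ on $D_{e^{-L}}(0)$, where $F(z)=(z/|z|,\,L+\log|z|)$ is exactly your $\Psi^{-1}$. The verification of the boundary trace, the energy identity via conformality, the parity under $z\mapsto -z$, and the $L^\infty$ bound all proceed in the same way.
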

\begin{proof}
First, and most importantly, we define the operator $K\colon \mathcal{O}\to W^{1,2}(\DD^2)$ on the odd functions $\mathcal{O}$. On the annulus $A_L=\DD\setminus D_{e^{-L}}(0)$, note that the map 
$$
F\colon A_L\to \Gamma_L,\text{ }F(z):=\left(\frac{z}{|z|}, L+\log |z|\right)
$$
is conformal, and satisfies $F(z)=(z,L)$ for $z\in \Sph^1$ and $F(\partial D_{e^{-L}}(0))=(\mathbb{S}^1\times \{0\})/\sim$. In particular, if $u\in \mathcal{O}$, it follows from \eqref{odd.van} that $u\circ F$ vanishes on $\partial D_{e^{-L}}(0)$, and defining
$$
K(u)=u\circ F\text{ on }A_L\text{ and }K(u)=0\text{ on }D_{e^{-L}}(0),
$$
it follows that $K$ gives a well-defined map $\mathcal{O}\to W^{1,2}(\DD)$. Moreover, it is clear from the conformality of $F$ that
$$
\|d[K(u)]\|_{L^2}^2=\|du\|_{L^2}^2
$$
for $u\in \mathcal{O}$, while the fact that $K(u)(z)=u(z,L)$ for $z\in \Sph^1$ and the bound $\|K(u)\|_{L^{\infty}}\leq \|u\|_{L^{\infty}}$ likewise follow immediately from the definition, as does the fact that
$$
K(u)(-z)=-K(u)(z)\text{ for any }u\in \mathcal{O}.
$$

We then extend $K$ to an operator $K: W^{1,2}(\Gamma_L)\to W^{1,2}(\DD)$ by setting
$$
K(u):=H(u_E)+K(u_O),
$$
where $H$ is the harmonic extension operator as in Lemma \ref{hex.lem}. Combining the preceding discussion with Lemma \ref{hex.lem} and familiar properties of the harmonic extension $W^{\frac{1}{2},2}(\Sph^1)\to W^{1,2}(\DD)$, it is now easy to see that $K$ satisfies all the desired properties.
\end{proof}

As a key consequence of Lemma \ref{ext.bds}, note that the operator $K$ satisfies
\begin{equation}\label{ener.drop}
\|d(K(u))\|_{L^2(\DD^2)}^2-\|du\|_{L^2(\Gamma_L)}^2=\|d(K(u)_E)\|_{L^2(\DD^2)}^2-\|du_E\|_{L^2(\Gamma_L)}^2,
\end{equation}
where we denote by $K(u)_E$ the even part of $K(u)$ with respect to the rotation $z\mapsto -z$. This is an immediate consequence of the estimates for $\|d(K(u_E))\|_{L^2}$ and $\|d(K(u_O))\|_{L^2}$, together with the observation that $K(u_E)=K(u)_E$ and $K(u_O)=K(u)_O$ are orthogonal in $W^{1,2}(\DD^2)$. 

\begin{remark}\label{vect.valued}
Though the spaces $W^{1,2}(\Gamma_L)$ and $W^{1,2}(\DD^2)$ may be interpreted as spaces of real-valued Sobolev functions, note that these could be replaced with the vector-valued Sobolev spaces $W^{1,2}(\Gamma_L,\mathbb{R}^n)$ and $W^{1,2}(\DD^2,\mathbb{R}^n)$ for arbitrary $n$, with no changes in the conclusions of Lemma \ref{ext.bds}.
\end{remark}

\subsection{Proof of Proposition \ref{cap.prop}}

Returning to the setting of Theorem \ref{cap.gap} and Proposition \ref{cap.prop}, we next reduce the proof of Proposition \ref{cap.prop} to the following lemma.

\begin{lemma}\label{cap.grad.small}
In the setting of Theorem \ref{cap.gap} and Proposition \ref{cap.prop}, there exists $L=L_0\in (1,\infty)$ and $C<\infty$ independent of $\epsilon>0$ such that if $\Lambda_1(M',[g'_{p,\epsilon,L}])\leq \bar{\lambda}_1(M,g_0)$, then there exists a map $\Phi_{\epsilon}\colon M\to \mathbb{R}^{n+1}$ satisfying
$$
\Delta_{g_1}\Phi_{\epsilon}=\lambda_1(M,g_0)f \Phi_{\epsilon},
$$
$$
\|1-|\Phi_{\epsilon}|\|_{L^2(M,g_1)}^2\leq C\epsilon^2
$$
and
$$
|d\Phi_{\epsilon}(p)|^2\leq C\epsilon.
$$
\end{lemma}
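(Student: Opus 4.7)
The plan is to apply Proposition~\ref{balanced.f} on $(M', [g'_{p,\epsilon,L_0}])$ with a suitable scalar balancing functional $\beta_\epsilon$ to produce a sphere-valued map $F_\epsilon \colon M' \to \mathbb{S}^n$, then use the refined extension operator $K$ of Lemma~\ref{ext.bds} to transplant $F_\epsilon|_{\Gamma_{L_0}}$ back to the disk $D_\epsilon(p)$, obtaining a map $\tilde F_\epsilon \colon M \to \mathbb{R}^{n+1}$. The map $\Phi_\epsilon$ will then be obtained by projecting $\tilde F_\epsilon$ onto the finite-dimensional $\lambda_1(g_0)$-eigenspace $E_1 \subset W^{1,2}(M)$, so that the eigenvalue equation is automatic. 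Concretely, first fix $L_0 > 1$ sufficiently large that $Ce^{-2L_0}$ (with $C$ the constant from Lemma~\ref{ext.bds}) is small relative to the spectral gap $\lambda_2(g_0) - \lambda_1(g_0)$. For each small $\epsilon$, with the choice $\beta_\epsilon(F) := \int_{N_{p,\epsilon}} F f\, dv_{g_1}$ encoding $g_0$-weighted mean-zero on $N_{p,\epsilon}$, Proposition~\ref{balanced.f} yields $F_\epsilon \in W^{1,2}(M', \mathbb{S}^n)$ with $\int_{M'} |dF_\epsilon|^2_{g'}\, dv_{g'} \leq \bar{\lambda}_1(M, g_0)$ under the hypothesis. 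Applying Lemma~\ref{ext.bds} componentwise (see Remark~\ref{vect.valued}), set $\tilde F_\epsilon = F_\epsilon$ on $N_{p,\epsilon}$ and $\tilde F_\epsilon = K(F_\epsilon|_{\Gamma_{L_0}})$ on the disk, using the conformal identification $D_\epsilon(p) \cong \mathbb{D}^2$; then~\eqref{ener.drop} gives $\int_M |d\tilde F_\epsilon|^2 \leq (1 + Ce^{-2L_0})\bar{\lambda}_1(M, g_0)$, while $|\tilde F_\epsilon| \equiv 1$ on $N_{p,\epsilon}$ and $\|\tilde F_\epsilon\|_{L^\infty} \leq 2$ throughout.

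Let $\Phi_\epsilon$ be the $L^2(M, f\,dv_{g_1})$-projection of $\tilde F_\epsilon$ onto $E_1$, and let $R_\epsilon := \tilde F_\epsilon - \Phi_\epsilon$. A Rayleigh quotient comparison combining the energy bound above, the approximate mean-zero property of $\tilde F_\epsilon$ (the extra contribution from $D_\epsilon(p)$ being $O(\epsilon^2)$), and the spectral gap yields $\|R_\epsilon\|_{L^2(g_0)}^2 \leq C\epsilon^2$ for our choice of $L_0$, which via elliptic regularity and the finite-dimensionality of $E_1$ transfers to a comparable $L^2(g_1)$-bound. Since $|\tilde F_\epsilon| \equiv 1$ on $N_{p,\epsilon}$ and $\area_{g_1}(D_\epsilon(p)) = O(\epsilon^2)$, we have $\|1 - |\tilde F_\epsilon|\|_{L^2(g_1)}^2 \leq C\epsilon^2$, and the triangle inequality $\bigl|1 - |\Phi_\epsilon|\bigr| \leq \bigl|1 - |\tilde F_\epsilon|\bigr| + |R_\epsilon|$ yields $\|1 - |\Phi_\epsilon|\|_{L^2(g_1)}^2 \leq C\epsilon^2$.

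For the gradient bound at $p$, the key input is that Lemma~\ref{ext.bds} gives $K(F_{\epsilon, O}) \equiv 0$ on $D_{e^{-L_0}}(0) \subset \mathbb{D}^2$; after rescaling, $\tilde F_\epsilon$ thus coincides on $D_{\epsilon e^{-L_0}}(p)$ with the harmonic extension $H(F_{\epsilon, E})$ of the even part of $F_\epsilon|_{\Gamma_{L_0}}$. Since $F_{\epsilon, E}$ is invariant under $z \mapsto -z$, its harmonic extension is even under reflection through $p$ on the small disk, forcing $d\tilde F_\epsilon(p) = 0$ and $\tilde F_\epsilon(y) - \tilde F_\epsilon(p) = O(|y - p|^2)$ on $D_{\epsilon e^{-L_0}}(p)$. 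Setting $u_\epsilon := \Phi_\epsilon - \tilde F_\epsilon$, on this small disk $u_\epsilon$ solves the Poisson equation $\Delta_{g_1} u_\epsilon = \lambda_1 f \Phi_\epsilon$ (as $\tilde F_\epsilon$ is harmonic there), and a scale-$\epsilon e^{-L_0}$ interior gradient estimate, combined with the $L^2$-closeness above and the pointwise smoothness of $\Phi_\epsilon$ inherited from $E_1$, produces the bound $|d\Phi_\epsilon(p)|^2 \leq C\epsilon$.

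The hardest part will be this final gradient estimate. The natural $L^2$-closeness between $\Phi_\epsilon$ and $\tilde F_\epsilon$ operates on the $\epsilon$-scale, while the exact reflection symmetry of $\tilde F_\epsilon$ holds only on the smaller disk of scale $\epsilon e^{-L_0}$; bridging these scales is the ``elementary bootstrapping'' alluded to in the introduction, exploiting the quadratic vanishing $\tilde F_\epsilon - \tilde F_\epsilon(p) = O(|y-p|^2)$ together with pointwise regularity of $\Phi_\epsilon$ from the finite-dimensional eigenspace. The refined extension operator of Lemma~\ref{ext.bds} is essential here: a naive harmonic extension as in~\cite{KKMS} would fail to annihilate the odd Fourier modes of $F_\epsilon|_{\Gamma_{L_0}}$ near the center, and these would generically contribute a nonzero linear term to $\tilde F_\epsilon$ at $p$, preventing the argument from closing.
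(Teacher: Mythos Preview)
Your overall architecture is right---apply Proposition~\ref{balanced.f}, extend via the operator $K$, project onto the first eigenspace---and you correctly identify that the refined extension kills the odd part near the center. But two of your quantitative steps do not close, and the missing ingredient in each case is the bootstrapping that the paper carries out.

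\textbf{The Rayleigh quotient step gives only $O(1)$, not $O(\epsilon^2)$.} With the crude energy bound $\int_M |d\tilde F_\epsilon|^2 \leq (1+Ce^{-2L_0})\bar\lambda_1(M,g_0)$ and the mass lower bound $\int_M |\tilde F_\epsilon|^2\,dv_{g_0}\geq \area(M,g_0)-C\epsilon^2$, the quadratic form satisfies only
\[
Q_{g_0}(\tilde F_\epsilon,\tilde F_\epsilon)\leq Ce^{-2L_0}\bar\lambda_1(M,g_0)+C\epsilon^2,
\]
which is $O(1)$ for fixed $L_0$. The spectral gap then gives $\|R_\epsilon\|^2\lesssim e^{-2L_0}+\epsilon^2$, not $\epsilon^2$. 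The paper avoids this by using the \emph{exact identity}~\eqref{t.ener.drop} rather than the inequality, so the excess energy equals $\|d\hat F_E\|_{L^2(D_\epsilon)}^2-\|dF_E\|_{L^2(\Gamma_L)}^2$. This is then bootstrapped: since $\Phi$ lies in the finite-dimensional eigenspace it enjoys uniform $C^3$ bounds, so $\|d\Phi_E\|_{L^2(D_\epsilon)}^2\leq C\epsilon^4$ (the even part of a $C^2$ function has vanishing gradient at the center), whence $\|d\hat F_E\|_{L^2(D_\epsilon)}^2\leq 2\|\hat F-\Phi\|_{W^{1,2}}^2+C\epsilon^4$. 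Choosing $L_0$ so that $Ce^{-2L_0}<\tfrac12$ lets one absorb the first term, yielding $\|\hat F-\Phi\|_{W^{1,2}}^2\leq C(\epsilon^2\|dF\|_{L^2(\Gamma_L)}+\epsilon^4)$. A second bootstrap then shows $\|dF\|_{L^2(\Gamma_L)}^2\leq C\epsilon^2$, giving the crucial $\|\hat F-\Phi\|_{W^{1,2}}^2\leq C\epsilon^3$.

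\textbf{The gradient estimate at $p$ does not follow from $L^2$-closeness alone.} Even granting $\|R_\epsilon\|_{L^2}^2\leq C\epsilon^2$, an interior gradient estimate for $\Delta u_\epsilon=\lambda_1 f\Phi_\epsilon$ on a disk of radius $r\sim\epsilon$ gives at best $|du_\epsilon(p)|\lesssim r^{-2}\|u_\epsilon\|_{L^1(D_r)}+r\lesssim r^{-1}\|R_\epsilon\|_{L^2}+r=O(1)$, not $O(\sqrt{\epsilon})$. The paper instead uses the $W^{1,2}$-closeness of order $\epsilon^{3}$ together with the vanishing $\hat F_O\equiv 0$ on $D_{e^{-L_0}\epsilon}(p)$ to get $\|d\Phi_O\|_{L^2(D_{e^{-L_0}\epsilon}(p))}^2\leq C\epsilon^3$ directly; then the $C^3$ regularity of $\Phi$ gives $|d\Phi_O(z)-d\Phi(p)|\leq C\epsilon$ pointwise on that disk, and averaging yields $|d\Phi(p)|^2\leq C\epsilon^{-2}\cdot\epsilon^3+C\epsilon^2\leq C\epsilon$. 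The point is that one controls $d\Phi_O$ in $L^2$ on the small disk (via $W^{1,2}$-closeness to $\hat F$, whose odd part vanishes there) and then upgrades to a pointwise bound using the a priori smoothness of $\Phi$, rather than running an elliptic estimate on the difference.
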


Before proving the lemma, we first show how it can be used to prove Proposition \ref{cap.prop}.

\begin{proof}[Proof of Proposition \ref{cap.prop}]

Suppose that $\Lambda_1(M',[g'_{p,\epsilon,L}])\leq \bar{\lambda}_1(M,g_0)$ for all $L\in (0,\infty)$ and $\epsilon\in (0,\epsilon_0)$, so that for each $\epsilon\in (0,\epsilon_0)$, there exists a map $\Phi_{\epsilon}\colon M\to \mathbb{R}^{n+1}$ satisfying the conclusions of Lemma \ref{cap.grad.small}. Per Remark \ref{eigen.basics}, recall that the first eigenspace for $\Delta_{g_0}$ is a finite-dimensional subspace of $C^{\infty}(M)$, so it follows that we can find a subsequence $\epsilon_j\to 0$ such that the maps $\Phi_{\epsilon_j}$ converge in $C^1$ to a limit $\Phi\colon M\to \mathbb{R}^{n+1}$ satisfying
$$
\Delta_{g_1}\Phi=\lambda_1(M,g_0)f \Phi,\qquad |\Phi|\equiv 1,$$
and
$$
d\Phi(p)=0.
$$
Standard computations for sphere-valued maps then give
$$
0=\frac{1}{2}\Delta_{g_1}|\Phi|^2=\Delta_{g_1}\Phi\cdot \Phi-|d\Phi|_{g_1}^2=\lambda_1(M,g_0)f-|d\Phi|_{g_1}^2,
$$
which together with the vanishing $d\Phi(p)=0$ implies that
$$
f(p)=\lambda_1(M,g_0)^{-1}|d\Phi(p)|_{g_1}^2=0,
$$
as desired.
\end{proof}

The remainder of the section is devoted to the proof of Lemma \ref{cap.grad.small}. To begin, recalling the construction of $(M',g'_{p,\epsilon,L})$, observe that we can use the operator $K$ of Lemma \ref{ext.bds} and the identification $\phi\colon \partial \Gamma_L\to \partial D_{\epsilon}(p)\subset M$ to define a map
$$
K\colon W^{1,2}(\Gamma_L)\to W^{1,2}(D_{\epsilon}(p))
$$
satisfying energy estimates and $L^{\infty}$ bounds identical to those of Lemma \ref{ext.bds}, alongside the trace condition
$$
K(u)(\phi(z))=u(z)
$$
for $z\in \partial \Gamma_L=\mathbb{S}^1\times L$. In particular, in this way we can then define a bounded linear map
$$
T\colon W^{1,2}(M')\to W^{1,2}(M)
$$
by setting
$$
T(u)(x)=u(x)\text{ for }x\in N_{p,\epsilon}
$$
and
$$
T(u)(x)=K(u|_{\Gamma_L})(x)\text{ for }x\in D_{\epsilon}(p).
$$
Denoting by $u_E$ and $u_O$ the even and odd parts of $u|_{\Gamma_L}$ as in Lemma \ref{ext.bds}, it then follows from the definition of $T$ and \eqref{ener.drop} that
\begin{equation}\label{t.ener.drop}
\|d[T(u)]\|_{L^2(M)}^2=\|du\|_{L^2(M')}^2+\|d[T(u)]_E\|_{L^2(D_{\epsilon}(p))}^2-\|du_E\|_{L^2(\Gamma_L)}^2,
\end{equation}
where $T(u)_E:=\frac{1}{2}(T(u)(p+z)+T(u)(p-z))$ for $p+z\in D_{\epsilon}(p)$, and
\begin{equation}\label{loc.even}
T(u)=T(u)_E\text{ on }D_{e^{-L}\epsilon}(p).
\end{equation}

\begin{proof}[Proof of Lemma \ref{cap.grad.small}]
Most objects defined in the following proof depend on the choice of $\epsilon$ and $L$, for example the map $F$ and all the notions derived from it. To avoid lengthy subscripts, we do not emphasise this dependence in our choice of notation for these objects. Instead,
 we keep careful track on the parameters in all the inequalities, in particular, the constants $C,C', C''$ are always uniform in $\epsilon\in(0,\epsilon_0)$ and $L\in(1,+\infty)$. As is customary, the exact value of the uniform constants changes from line to line.

With $T\colon W^{1,2}(M')\to W^{1,2}(M)$ defined as above, consider the functional $\beta\in W^{-1,2}(M')$ given by
$$
\beta(u):=\int_M T(u)\,dv_{g_0}=\int_M T(u)f \,dv_{g_1}.
$$
It follows from Proposition~\ref{balanced.f} that there exists $n=n([g_1])$ and a map $F\in W^{1,2}(M',\mathbb{S}^n)$ for which 
\begin{equation}
\label{f.ener.bd}
\int_{M'}|dF|^2_{g'}\,dv_{g'}\leq \Lambda_1(M',[g']),
\end{equation}
while $\hat{F}:=T(F)\in W^{1,2}(M,\R^{n+1})$ satisfies
\begin{equation}\label{cap.bal}
\int_M \hat{F}\,dv_{g_0}=0.
\end{equation}
Since the original map $F$ takes values in $\mathbb{S}^n$, note moreover that $|\hat{F}|=|F|=1$ on $M\setminus D_{\epsilon}(p)$, while $|\hat{F}|\leq 2$ on $D_{\epsilon}(p)$, by Lemma \ref{ext.bds}.

By an application of the standard $W^{1,1}_0\to L^1$ Poincar\'e inequality to the function $1-|\hat{F}|^2$ on the disk $D_{\epsilon}(p)$, we observe next that
$$
\int_{D_{\epsilon}(p)}|1-|\hat{F}|^2|\,dv_{g_p}\leq C\epsilon \|\hat{F}\cdot d\hat{F}\|_{L^1(D_{\epsilon}(p))}\leq C'\epsilon^2\|d\hat{F}\|_{L^2(D_{\epsilon}(p))},
$$
which together with the estimates of Lemma \ref{ext.bds} leads to an estimate of the form
$$
\int_{D_{\epsilon}(p)}|1-|\hat{F}|^2|\,dv_{g_0}\leq C''\epsilon^2\|dF\|_{L^2(\Gamma_L)}.
$$
In particular, since $|\hat{F}|=|F|=1$ on $M\setminus D_{\epsilon}(p)$, it follows that
\[
\int_M |\hat{F}|^2\,dv_{g_0}\geq \area(M,g_0)-C\eps^2\|dF\|_{L^2(\Gamma_L)}.
\]

On the other hand, under the hypotheses of Proposition \ref{cap.prop}, we have that $\Lambda_1(M',[g'])\leq \bar{\lambda}_1(M,g_0)$, which together with \eqref{t.ener.drop} and \eqref{f.ener.bd} yields
$$
\int_M|d\hat{F}|_{g_1}^2\,dv_{g_1}\leq \bar{\lambda}_1(M,g_0)+\|d\hat{F}_E\|_{L^2(D_{\epsilon}(p))}^2-\|dF_E\|_{L^2(\Gamma_L)}^2,
$$
and combining this with the preceding estimate, it follows that
\begin{equation}\label{cap.q.1}
Q_{g_0}(\hat{F},\hat{F})\leq \|d\hat{F}_E\|_{L^2(D_{\epsilon}(p))}^2-\|dF_E\|_{L^2(\Gamma_L)}^2+C'\epsilon^2\|dF\|_{L^2(\Gamma_L)},
\end{equation}
where
$$
Q_{g_0}(u,v):=\langle du,dv\rangle_{L^2(M,[g_1])}-\lambda_1(M,g_0)\langle u,v\rangle_{L^2(M,dv_{g_0})}.
$$

Arguing as in [KNPS, Section 2], we observe next that since $\hat{F}$ satisfies the balancing condition \eqref{cap.bal}, and, by definition of $\lambda_1(M,g_0)$, the quadratic form $Q_{g_0}$ is nonnegative definite on the orthogonal complement of the constants in $L^2(M,g_0)$, the Cauchy-Schwarz inequality for $Q_{g_0}$ gives
$$
Q_{g_0}(\hat{F},u)\leq \sqrt{Q_{g_0}(\hat{F},\hat{F})Q_{g_0}(u,u)}.
$$
Now, if $k$ is the largest integer for which $\lambda_k(M,g_0)=\lambda_1(M,g_0)$, in the notation of Remark \ref{eigen.basics}, let $\Phi$ be the projection of $\hat{F}$ onto the eigenspace $\mathcal{E}_{\lambda_1}=\mathrm{Span}\{\phi_1,\ldots,\phi_k\}$, and observe that
$$
Q_{g_0}(\hat{F},\hat{F}-\Phi)=Q_{g_0}(\hat{F}-\Phi,\hat{F}-\Phi)\geq \left(1-\frac{\lambda_1(M,g_0)}{\lambda_{k+1}(M,g_0)}\right)\|d(\hat{F}-\Phi)\|_{L^2}^2.
$$
Combining this with the Cauchy-Schwarz inequality above for $u=\hat{F}-\Phi$, it follows in particular that
$$
\|\hat{F}-\Phi\|_{W^{1,2}}^2\leq C(M,g_0)\sqrt{Q_{g_0}(\hat{F},\hat{F})}\|\hat{F}-\Phi\|_{W^{1,2}},
$$
which together with \eqref{cap.q.1} gives an estimate of the form
\begin{equation}\label{h1close}
\|\hat{F}-\Phi\|_{W^{1,2}}^2\leq C\left(\|d\hat{F}_E\|_{L^2(D_{\epsilon}(p))}^2-\|dF_E\|_{L^2(\Gamma_L)}^2+\epsilon^2\|dF\|_{L^2(\Gamma_L)}\right).
\end{equation}

Note that, as an easy consequence of \eqref{h1close}, we have
\[
\|dF_E\|_{L^2(\Gamma_L)}^2\leq \|d\hat{F}_E\|_{L^2(D_{\eps})}^2+C\eps^2\|dF\|_{L^2(\Gamma_L)},
\]
while by definition of $\hat{F}=T(F)$, Lemma \ref{ext.bds} gives
\[
\|d\hat{F}_E\|_{L^2(D_{\eps})}^2\leq (1+Ce^{-2L})\|dF_E\|_{L^2(\Gamma_L)}^2,
\]
and combining these estimates yields
\begin{eqnarray*}
\|d\hat{F}_E\|_{L^2(D_{\eps})}^2-\|dF_E\|_{L^2(\Gamma_L)}^2&\leq &Ce^{-2L}\|dF_E\|_{L^2(\Gamma_L)}^2\\
&\leq &Ce^{-2L}\left(\|d\hat{F}_E\|_{L^2(D_\epsilon)}^2+\eps^2\|dF\|_{L^2(\Gamma_L)}\right).
\end{eqnarray*}
In particular, we can recast estimate \eqref{h1close} as
\begin{equation}\label{h1close.2}
\|\hat{F}-\Phi\|_{W^{1,2}}^2\leq C'e^{-2L}\|d\hat{F}_E\|_{L^2(D_{\eps})}^2+C'\eps^2\|dF\|_{L^2(\Gamma_L)}.
\end{equation}

Now, since the components of $\Phi$ belong to the (finite-dimensional) first eigenspace $\mathcal{E}_{\lambda_1}(M,g_0)$, note that with respect to either of the reference metrics $g_1$ or $g_p$, we have uniform $C^3$ estimates of the form
\begin{equation}\label{c3.bd}
\|\Phi\|_{C^3}\leq C \|\Phi\|_{W^{1,2}}\leq C'\|\hat{F}\|_{W^{1,2}}\leq C''.
\end{equation}
As  consequence, decomposing $\Phi$ into its even and odd components
\[
\Phi_O(z)=\frac{1}{2}\left(\Phi(z)-\Phi(-z)\right)=d\Phi(0) \cdot z+O(|z|^3)
\]
and
\[
\Phi_E(z)=\frac{1}{2}\left(\Phi(z)+\Phi(-z)\right)=\Phi(0)+O(|z|^2),
\]
on the disk $D_{\eps}(p)\cong D_{\eps}(0)$, and observing that $d\Phi_E(0)=0$, we see that 
$$|d\Phi_E(z)|_{g_p}\leq \|\Phi_E\|_{C^2}|z|\leq C|z|,$$
and therefore
$$\|d\Phi_E\|_{L^2(D_{\epsilon}(p))}^2\leq C\epsilon^4.$$
As a consequence, it follows that
\begin{eqnarray*}
\|d\hat{F}_E\|_{L^2(D_{\eps}(p))}^2&\leq &2\|d\Phi_E\|_{L^2(D_{\eps}(p))}^2+2\|\Phi-\hat{F}\|_{W^{1,2}}^2\\
&\leq & 2\|\Phi-\hat{F}\|_{W^{1,2}}^2+C\epsilon^4,
\end{eqnarray*}
and combining this with \eqref{h1close.2}, we deduce that
$$\|\hat{F}-\Phi\|_{W^{1,2}}^2\leq C''e^{-2L}(\|\hat{F}-\Phi\|_{W^{1,2}}^2+\epsilon^4)+C'\eps^2\|dF\|_{L^2(\Gamma_L)}.$$
Thus, fixing $L=L_0$ sufficiently large (independent of $\epsilon$) so that $C''e^{-2L}<\frac{1}{2}$, we arrive at an estimate of the form
\begin{equation}\label{h1close.3}
\|\hat{F}-\Phi\|_{W^{1,2}}^2\leq C(\epsilon^2\|dF\|_{L^2(\Gamma_L)}+\epsilon^4).
\end{equation}

Next, by an application of \eqref{c3.bd} and \eqref{h1close.3}, we observe that
\begin{eqnarray*}
\|d\hat{F}\|_{L^2(D_{\eps}(p))}^2&\leq & 2\|d\Phi\|_{L^2(D_{\eps}(p))}^2+2C(\eps^2\|dF\|_{L^2(\Gamma_L)}+\epsilon^4)
\leq C'\epsilon^2,
\end{eqnarray*}
which together with the fact that 
$$
\|d\hat{F}\|_{L^2(M)}^2\geq \lambda_1\|\hat{F}\|_{L^2(M,g_0)}^2\geq \bar{\lambda}_1(M,g_0)-C\epsilon^2
$$
forces
$$
\|dF\|_{L^2(M\setminus D_{\eps}(p))}^2\geq \bar{\lambda}_1(M,g_0)-C'\epsilon^2.
$$
On the other hand, since
$$
\|dF\|_{L^2(M',g')}^2\leq \Lambda_1(M',[g'])\leq \bar{\lambda}_1(M,g_0)
$$
by assumption, it follows that
$$
\|dF\|_{L^2(\Gamma_L)}^2=\|dF\|_{L^2(M',g')}^2-\|dF\|_{L^2(M\setminus D_{\eps}(p))}^2\leq C'\epsilon^2.
$$

Applying this estimate to the right-hand side of \eqref{h1close.3}, we arrive at the simple bound
$$
\|\hat{F}-\Phi\|_{W^{1,2}}^2\leq C'\epsilon^3,
$$
and in particular, we have
\begin{equation}\label{odd.small}
\|d\hat{F}_O-d\Phi_O\|_{L^2(D_{\eps}(p))}^2\leq C'\epsilon^3
\end{equation}
for the odd components $\hat{F}_O$, $\Phi_O$ on $D_{\eps}(p)$. On the other hand, it follows from \eqref{loc.even} that $\hat{F}_O\equiv 0$ on $D_{e^{-L}\eps}(p)$, which together with \eqref{odd.small} implies
$$
\|d\Phi_O\|_{L^2(D_{e^{-L}\eps}(p))}^2\leq C'\epsilon^3.
$$
Recalling that $L=L_0$ was fixed above, and appealing to the uniform $C^3$ estimates \eqref{c3.bd} for $\Phi$ to see that
$$
|d\Phi_O(z)-d\Phi(p)|\leq C|z-p|\leq C\epsilon
$$
for all $z\in D_{\eps}(p)$, we then deduce that
$$
|d\Phi(p)|^2\leq C\epsilon^2+\frac{C'}{\epsilon^2}\int_{D_{e^{-L_0}\eps}(p)}|d\Phi_O|^2\leq C''\epsilon.
$$
Observing moreover that \eqref{h1close.3} easily leads to an estimate of the form $\int_M(1-|\Phi|)^2\leq C\epsilon^2$, we conclude that $\Phi_{\epsilon}=\Phi$ satisfies the conclusions of Lemma \ref{cap.grad.small}. 

\end{proof}

\section{Attaching handles}

In this section, we prove the following analog of Theorem \ref{cap.gap} in the orientable setting; in this case, an alternate proof can also be found in~\cite{PetridesNew}. The key difference between the arguments that follow and those of the previous section--and the main source of improvement over the methods of~\cite{KKMS}--is the central role played by the choice of the attaching map used to affix a handle at two adjacent points.

\begin{theorem}\label{hand.gap}
Let $M$ be a closed surface, and let $g_0$ be a (possibly degenerate) metric on $M$ of the form $g_0=fg_1$, where $g_1$ is a smooth metric and $f$ is a smooth, nonnegative function with $\#f^{-1}\{0\}<\infty$. Then
$$\bar{\lambda}_1(M,g_0)=\bar{\lambda}_1([g_1],f\,dv_{g_1})<\Lambda_1(M\#\mathbb{T}^2).$$
\end{theorem}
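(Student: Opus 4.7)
The plan is to mirror Section~3, deriving a contradiction from the assumption $\Lambda_1(M',[g'_{p,q,\epsilon,L}])\leq \bar{\lambda}_1(M,g_0)$ for a suitable family of conformal classes on $M'=M\#\T^2$. Following the hint in the introduction, I would fix $p\in M$ with $f(p)>0$ and a reference metric $g_p\in[g_1]$ Euclidean near $p$, choose a unit vector $e$, and set $q=q_\epsilon:=p+\sqrt{\epsilon}\,e$. I would then remove $D_\epsilon(p)\cup D_\epsilon(q)$ from $M$ and glue in the flat cylinder $C_L=\Sph^1\times[-L,L]$ via $\phi_+(z,L)=p+\epsilon z$ at $t=L$ and, crucially, the \emph{$180^{\circ}$-twisted} attaching map $\phi_-(z,-L)=q-\epsilon z$ at $t=-L$. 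Lemma~\ref{straight.lem} then supplies a well-defined conformal class $[g'_{p,q,\epsilon,L}]$ on $M'$.

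The first substantive step is to build a refined extension operator adapted to this twisted handle, playing the role of Lemma~\ref{ext.bds}. The twist is chosen precisely so that the involution $\sigma(z,t)=(-z,-t)$ on $C_L$ intertwines with the swap $p+\epsilon w\leftrightarrow q+\epsilon w$ of the two disks (one checks that $\sigma(z,L)=(-z,-L)$ maps under the attachings to $p+\epsilon z \leftrightarrow q+\epsilon z$). Since $C_L/\sigma$ is isometric to the M\"obius band $\Gamma_L$ of Section~3, I plan to construct $K\colon W^{1,2}(C_L)\to W^{1,2}(D_\epsilon(p)\sqcup D_\epsilon(q))$ by decomposing $u=u_E+u_O$ under $\sigma$, descending $u_O$ to a section of the sign bundle on the quotient $\Gamma_L$, applying the cross-cap operator of Lemma~\ref{ext.bds} in this quotient, and lifting back via $\sigma$. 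The resulting $K$ will satisfy the harmonic-extension bound $\|d[K(u_E)]\|_{L^2}^2\leq(1+Ce^{-2L})\|du_E\|_{L^2}^2$ for the even part, the exact identity $\|d[K(u_O)]\|_{L^2}^2=\|du_O\|_{L^2}^2$ for the odd part, and pointwise vanishing of $K(u_O)$ on $D_{e^{-L}\epsilon}(p)\cup D_{e^{-L}\epsilon}(q)$.

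With the induced operator $T\colon W^{1,2}(M')\to W^{1,2}(M)$ in hand, the bulk of the argument will follow the proof of Lemma~\ref{cap.grad.small} essentially verbatim. Applying Proposition~\ref{balanced.f} to the functional $\beta(u):=\int_M T(u)\,dv_{g_0}$ will produce a map $F\in W^{1,2}(M',\Sph^n)$ satisfying $\int_M \hat F\,dv_{g_0}=0$ and $\int_{M'}|dF|^2_{g'}\leq\bar{\lambda}_1(M,g_0)$. The Cauchy-Schwarz estimate for the nonnegative quadratic form $Q_{g_0}$, the $K$-energy-drop identity, uniform $C^3$ control on the first eigenspace, and absorption in $L=L_0$ chosen large enough will yield a first-eigenspace projection $\Phi_\epsilon$ with $\|\hat F-\Phi_\epsilon\|_{W^{1,2}}^2\leq C\epsilon^3$, $\|1-|\Phi_\epsilon|\|_{L^2}^2\leq C\epsilon^2$, and $\|dF\|_{L^2(C_L)}^2\leq C\epsilon^2$.

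The final step, extracting $d\Phi(p)=0$ along a subsequential limit, is the main obstacle and splits into the directions parallel and perpendicular to $q-p$. For the parallel direction, I plan to use a refinement of the $0$-th order estimate of~\cite{KKMS}---exploiting that the small cylinder energy $\|dF\|_{L^2(C_L)}^2\leq C\epsilon^2$ forces $F$ to be nearly constant on the handle---to obtain $|\Phi_\epsilon(p)-\Phi_\epsilon(q_\epsilon)|=o(\sqrt{\epsilon})$, so that Taylor expansion together with $C^1$ convergence forces $d\Phi(p)\cdot e=0$ in any subsequential limit. For the perpendicular direction, the $180^{\circ}$ twist and the refined extension are essential: the pointwise vanishing of $\hat F_{\sigma\text{-}O}$ on $D_{e^{-L_0}\epsilon}(p)\cup D_{e^{-L_0}\epsilon}(q)$, combined with $\|\hat F-\Phi_\epsilon\|_{W^{1,2}}^2\leq C\epsilon^3$, will support a bootstrapping argument in the same spirit as the last step of Lemma~\ref{cap.grad.small} and yield a bound $|d\Phi(p)\cdot e^\perp|^2\leq C\epsilon$. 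Passing to a $C^1$-convergent subsequence will then produce a sphere-valued eigenmap $\Phi\colon M\to\Sph^n$ with $|\Phi|\equiv 1$ and $d\Phi(p)=0$, and the standard identity $|d\Phi|_{g_1}^2=\lambda_1(M,g_0)\,f$ will force $f(p)=0$, contradicting the choice of $p$.
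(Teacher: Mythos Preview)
Your overall plan matches the paper's, but the specific involution you choose derails both the extension operator and the bootstrap. With your attaching maps and $\sigma(z,t)=(-z,-t)$, the induced involution on $\mathcal{D}_\epsilon$ is the \emph{translation} swap $\tau\colon p+w\leftrightarrow q+w$, so $d\tau=\mathrm{Id}$. First, $\sigma$-odd functions satisfy only $u(-z,0)=-u(z,0)$ and do not vanish on $\mathbb{S}^1\times\{0\}$, so the conformal-map-then-extend-by-zero step fails, and your claim that $K(u_O)\equiv 0$ on the inner disks does not follow from the M\"obius-quotient construction (Lemma~\ref{ext.bds} applies to functions on $\Gamma_L$, not to sections of the sign bundle). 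Second---and this is the decisive obstruction---the $\tau$-even part of the eigenmap has $d\Phi_E^\tau(p+w)=\tfrac12\bigl[d\Phi(p+w)+d\Phi(q+w)\bigr]\approx d\Phi(p)$, which is not small a priori; hence $\|d\Phi_E^\tau\|_{L^2(\mathcal{D}_\epsilon)}^2$ is only $O(\epsilon^2)$, and the absorption step cannot produce your claimed $\|\hat F-\Phi\|_{W^{1,2}}^2\le C\epsilon^3$. The same defect kills the perpendicular estimate: since $d\tau=\mathrm{Id}$ treats $e$ and $e^\perp$ symmetrically, neither the $\tau$-even nor the $\tau$-odd part isolates $d\Phi(p)\cdot e^\perp$.

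The paper instead attaches via a \emph{reflection}: with $A_e$ the reflection fixing $e$ and sending $e^\perp\mapsto -e^\perp$, set $\phi(z,-L)=p+\epsilon z$ and $\phi(z,L)=q+\epsilon A_e z$. The relevant cylinder involution is then $\rho(z,t)=(z,-t)$, whose odd functions genuinely vanish on $\mathbb{S}^1\times\{0\}$ (giving the energy-preserving extension with $K(u_O)\equiv 0$ on the inner disks), and the disk involution is $\iota(p+w)=q+A_e w$ with $d\iota=A_e$. Because $A_e e^\perp=-e^\perp$, one gets $d\Phi_E^\iota\cdot e^\perp=\tfrac12\bigl[d\Phi-d\Phi\circ\iota\bigr]\cdot e^\perp$, a \emph{difference} of gradients at points $O(\sqrt\epsilon)$ apart, hence $O(\sqrt\epsilon)$ pointwise. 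Combined with your (correct) parallel estimate, this yields $\|d\Phi_E^\iota\|_{L^2(\mathcal{D}_\epsilon)}^2\le C\epsilon^3|\log\epsilon|$; feeding this back closes the bootstrap, and the vanishing of $\hat F_O$ on the inner disks then gives $|d\Phi(p)|^2\le C\epsilon|\log\epsilon|$.
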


As in the previous section, we begin by fixing an arbitrary point $p\in M$, and assume without loss of generality--after making a smooth conformal change--that the reference metric $g_1$ is Euclidean on some geodesic disk $D_{\delta_0}(p)$ about $p$ of radius $\delta_0<\frac{1}{2}$. Next, fix a unit vector $v\in T_pM$, and for each $\epsilon<\frac{1}{4}\delta_0^2$, set 
$$q=q_{\epsilon}:=p+\sqrt{\epsilon}v,$$
so that the disks $D_{\epsilon}(p)$ and $D_{\epsilon}(q_{\epsilon})$ are disjoint and contained in $D_{\delta_0}(p)$. Denoting their union by
$$\mathcal{D}_{\epsilon}:=D_{\epsilon}(p)\cup D_{\epsilon}(q),$$
we then define
$$N_{p,v,\epsilon}:=M\setminus \mathcal{D}_{\epsilon},$$
and consider the orientation-reversing involution
$$\iota: \mathcal{D}_{\epsilon}\to \mathcal{D}_{\epsilon}$$
given by
$$\iota(x)=q+A_v(x-p)\text{ for }x\in D_{\epsilon}(p)\text{ and }\iota(x)=p+A_v(x-q)\text{ for }x\in D_{\epsilon}(q),$$
where $A_v\in \Isom(T_pM)$ is given by reflection across $\mathrm{Span}(v)$, so that $Av=v$ and $Av^{\perp}=-Av^{\perp}$.

Next, denote by $T_L=\Sph^1\times [-L,L]$ the flat cylinder of width $2\pi$ and length $2L$, and define an attaching map
$$
\phi\colon \partial T_L\to \partial N_{p,v,\epsilon}
$$
by
$$\phi(z,-L)=p+\epsilon z\text{ and }\phi(z,L)=\iota(p+\epsilon z).$$
Proceeding again as in Section \ref{glue}, consider the $L^{\infty}$ metric $g'_{\epsilon,L}$ defined on 
$$M'=N_{p,v,\epsilon}\cup_{\phi}T_L\approx M\#\mathbb{T}^2$$
such that $g'_{\epsilon,L}=g_1$ on $N_{p,v,\epsilon}$ and $g'_{\epsilon,L}$ agrees with the given flat metric on $T_L$. We then have the following analog of Proposition \ref{cap.prop}.

\begin{proposition}\label{hand.prop}
If $\Lambda_1(M',[g'_{\epsilon,L}])\leq \bar{\lambda}_1(M,g_0)$ for all $L\in (0,\infty)$ and $\epsilon<\frac{\delta_0^2}{4}$, then $f(p)=0$.
\end{proposition}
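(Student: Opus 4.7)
The plan is to follow the structure of the proof of Proposition~\ref{cap.prop}, with a new refined extension operator $K\colon W^{1,2}(T_L) \to W^{1,2}(\mathcal{D}_\epsilon)$ adapted to the twist $A_v$ in the attaching map. The key observation is that the involution $\tilde{\tau}(z,t) = (z,-t)$ on $T_L$ corresponds, via $\phi$, to the involution $\iota$ on $\mathcal{D}_\epsilon$, in the sense that $\phi \circ \tilde{\tau} = \iota \circ \phi$ on $\partial T_L$; this is precisely the role played by the reflection $A_v$ at the top boundary. Decomposing $W^{1,2}(T_L) = \mathcal{E} \oplus \mathcal{O}$ into $\tilde{\tau}$-even and $\tilde{\tau}$-odd summands (so that $u_O|_{\{t=0\}} = 0$ for $u_O \in \mathcal{O}$), I would define $K(u_E)$ on $\mathcal{D}_\epsilon$ to be the harmonic extension to each disk---which by Lemma~\ref{hex.lem} satisfies $\|dK(u_E)\|_{L^2}^2 \leq (1+Ce^{-2L})\|du_E\|_{L^2}^2$---and $K(u_O)$ by conformally pulling back $u_O|_{[-L,0]}$ to the annulus $D_\epsilon(p) \setminus D_{e^{-L}\epsilon}(p)$ and $u_O|_{[0,L]}$ to $D_\epsilon(q) \setminus D_{e^{-L}\epsilon}(q)$ (the latter composed with the $A_v$-twist), extending by zero on the inner disks $D_{e^{-L}\epsilon}(p) \cup D_{e^{-L}\epsilon}(q)$. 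The vanishing $u_O|_{t=0}=0$ makes this well-defined and conformal invariance of Dirichlet energy gives the exact identity $\|dK(u_O)\|_{L^2}^2 = \|du_O\|_{L^2}^2$; $K$ also respects parities, taking $\mathcal{E}$ to $\iota$-even functions on $\mathcal{D}_\epsilon$ and $\mathcal{O}$ to $\iota$-odd.

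Defining $T\colon W^{1,2}(M') \to W^{1,2}(M)$ as in Section~\ref{glue} and applying Proposition~\ref{balanced.f} with $\beta(u) = \int_M T(u) f\,dv_{g_1}$ produces $F \in W^{1,2}(M',\mathbb{S}^n)$ with $\hat{F} := T(F)$ satisfying the balancing $\int_M \hat{F}\,dv_{g_0} = 0$ and the energy identity
\[
\|d\hat{F}\|_{L^2(M)}^2 = \|dF\|_{L^2(M')}^2 + \|d\hat{F}^\iota_E\|_{L^2(\mathcal{D}_\epsilon)}^2 - \|dF^{\tilde{\tau}}_E\|_{L^2(T_L)}^2.
\]
Proceeding as in the proof of Lemma~\ref{cap.grad.small} (a Poincar\'e estimate for $1 - |\hat{F}|^2$ on $D_\epsilon(p) \cup D_\epsilon(q)$ plus the Cauchy--Schwarz inequality for $Q_{g_0}$), one obtains that the projection $\Phi$ of $\hat{F}$ onto the first eigenspace of $\Delta_{g_0}$ satisfies
\[
\|\hat{F} - \Phi\|_{W^{1,2}}^2 \lesssim \|d\hat{F}^\iota_E\|_{L^2(\mathcal{D}_\epsilon)}^2 - \|dF^{\tilde{\tau}}_E\|_{L^2(T_L)}^2 + \epsilon^2 \|dF\|_{L^2(T_L)}.
\]

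Unlike in the cross-cap setting, the symmetric part $\Phi^\iota_E$ does not vanish to high order at $p$: a Taylor expansion combined with the uniform $C^3$ bound on $\Phi$ gives $d\Phi^\iota_E(p)(v) = d\Phi(p)(v) + O(\sqrt{\epsilon})$ and $d\Phi^\iota_E(p)(v^\perp) = O(\sqrt{\epsilon})$, so $\|d\Phi^\iota_E\|_{L^2(\mathcal{D}_\epsilon)}^2 \leq C\epsilon^2 |d\Phi(p)(v)|^2 + C\epsilon^3$. The bootstrap then proceeds in two steps. First, the mean-value identity $K(u_E)(p) = \overline{u_E|_{t=-L}} = \overline{u_E|_{t=L}} = K(u_E)(q)$ combined with $K(u_O)(p) = K(u_O)(q) = 0$ yields $\hat{F}(p) = \hat{F}(q)$ exactly; elliptic regularity for $\Phi - \hat{F}$ on the disks (where $\hat{F}$ is harmonic in its $\iota$-even part) should give the $0$-th order estimate $|\Phi(p) - \Phi(q)| = O(\epsilon)$, and since $q = p + \sqrt{\epsilon}v$, Taylor forces $|d\Phi(p)(v)| = O(\sqrt{\epsilon})$. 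This feeds back to give $\|d\Phi^\iota_E\|_{L^2(\mathcal{D}_\epsilon)}^2 \leq C\epsilon^3$, hence (after fixing $L = L_0$ so that the $Ce^{-2L}$ term is absorbed) $\|\hat{F} - \Phi\|_{W^{1,2}}^2 \leq C\epsilon^3$. Second, the conformal zero-extension property $\hat{F}^\iota_O \equiv 0$ on $D_{e^{-L}\epsilon}(p)$ combined with the identity $d\Phi^\iota_O(p)(v^\perp) = d\Phi(p)(v^\perp) + O(\sqrt{\epsilon})$ gives $|d\Phi(p)(v^\perp)|^2 \leq C\epsilon$, just as in the cross-cap case. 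Passing to a $C^1$-limit as $\epsilon \to 0$ then produces $\Phi$ with $|\Phi| \equiv 1$ and $d\Phi(p) = 0$, and the identity $|d\Phi|_{g_1}^2 = \lambda_1 f$ yields $f(p) = 0$.

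The hard part is establishing the $0$-th order estimate $|\Phi(p) - \Phi(q)| = O(\epsilon)$. Since the $W^{1,2}$-bound on $\Phi - \hat{F}$ does not embed into $C^0$ in two dimensions, the pointwise identity $\hat{F}(p) = \hat{F}(q)$ cannot be transferred to $\Phi$ directly; instead one must exploit the structure of $\hat{F}$ on the disks (harmonic for $\iota$-even modes, conformally zero for $\iota$-odd modes) together with elliptic regularity for the equation $\Delta_{g_1}(\Phi - \hat{F}) = -\lambda_1 f \Phi$ on $D_\epsilon(p) \cup D_\epsilon(q)$ and the smoothness of $\Phi$ to upgrade the $W^{1,2}$-closeness into $L^\infty$-control with the correct scaling in $\epsilon$. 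Once this step is carried out, the remainder of the argument is a straightforward handle analog of the cross-cap argument, with the $180^\circ$ twist playing the twin roles of forcing the cancellation $\hat{F}(p) = \hat{F}(q)$ and aligning the $\tilde{\tau}$-decomposition on the cylinder with the $\iota$-decomposition on $\mathcal{D}_\epsilon$.
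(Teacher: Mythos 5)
Your overall architecture matches the paper's: same refined extension operator splitting $W^{1,2}(T_L)$ into $\tilde\tau$-even and $\tilde\tau$-odd parts (harmonic extension for the even part, conformal pullback with zero-extension for the odd part), same energy bookkeeping via an analog of \eqref{t.ener.drop}, same use of Proposition~\ref{balanced.f}, and same endgame where the $v$-direction derivative is controlled by a $0$-th order two-point estimate while the $v^\perp$-direction is controlled by the odd-part vanishing on the inner disk. The exact identity $\hat F(p)=\hat F(q)$ that you derive from the mean-value property is correct.

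However, the step you yourself flag as ``the hard part''---passing from $\hat F(p)=\hat F(q)$ to $|\Phi(p)-\Phi(q)|=O(\epsilon)$---is a genuine gap, and the elliptic-regularity sketch you offer to close it does not work as stated. First, $\Phi-\hat F$ does not satisfy a clean elliptic equation on $\mathcal D_\epsilon$: the odd part of $\hat F$ is a conformal pullback, not a harmonic function, so $\Delta_{g_1}(\Phi-\hat F)=\lambda_1 f\Phi$ fails. Second, even if it did hold, interior elliptic estimates on a disk of radius $\epsilon$ with $\|\Phi-\hat F\|_{W^{1,2}}=O(\epsilon)$ sit precisely at the critical Sobolev exponent in two dimensions, and one should not expect $L^\infty$ control without a logarithmic loss. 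The paper resolves this differently and does not use the exact identity $\hat F(p)=\hat F(q)$ at all: it estimates the \emph{boundary circle averages} of $\hat F$ on $\partial D_\epsilon(p)$ and $\partial D_\epsilon(q)$ via the fundamental theorem of calculus along the cylinder (cost $O(\epsilon)$ from $\|dF\|_{L^2(T_L)}\leq C\epsilon$), then transfers to the boundary circle averages of $\Phi$ via \cite[Lemma 8.15]{KKMS}, a trace/capacity estimate that costs a factor of $\sqrt{|\log\epsilon|}$ times $\|\hat F-\Phi\|_{W^{1,2}}$, and finally to pointwise values of $\Phi$ by the $C^3$ bounds. The upshot is $|\Phi(p)-\Phi(q)|\leq C\epsilon\sqrt{|\log\epsilon|}$, weaker than $O(\epsilon)$ but still sufficient: pushed through the rest of your argument it gives $|d\Phi(p)|^2\leq C\epsilon|\log\epsilon|\to 0$, which is all that is needed.

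Two further small remarks. The two-step bootstrap you propose (first get $|d\Phi(p)\cdot v|$ small, feed it back into $\|d\Phi_E\|^2_{L^2(\mathcal D_\epsilon)}$, then redo the $W^{1,2}$ estimate) is not needed: the crude bound $\|d\Phi\|^2_{L^2(\mathcal D_\epsilon)}\leq C\epsilon^2$ from the uniform $C^3$ estimates already yields $\|\hat F-\Phi\|_{W^{1,2}}^2\leq C\epsilon^2$ and $\|dF\|^2_{L^2(T_L)}\leq C\epsilon^2$ directly, and this is enough input for the Lemma~8.15 step. Also, for the bound $\|d\Phi_E\|^2_{L^2(\mathcal D_\epsilon)}\leq C\epsilon^3|\log\epsilon|$ one must control $d\Phi_E(x)\cdot v$ for \emph{all} $x\in\mathcal D_\epsilon$, not merely at $p$; the paper does this by running the two-point estimate with $q$ as base point as well and using the $C^3$ bounds to propagate.
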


Once Proposition \ref{hand.prop} has been proved, Theorem \ref{hand.gap} once again follows immediately by selecting a point $p$ for which $f(p)>0$ and choosing $\epsilon,L$ such that $\Lambda_1(M',[g'_{\epsilon,L}])>\bar{\lambda}_1(M,g_0)$. The rest of this section is therefore devoted to the proof of Proposition \ref{hand.prop}.

\subsection{A refined extension operator for handles} 

Let $T_L=\mathbb{S}^1\times [-L,L]$ and $\mathcal{D}_{\epsilon}=D_{\epsilon}(p)\cup D_{\epsilon}(q)$ be as above, equipped with the involution $\iota\colon \mathcal{D}_{\epsilon}\to \mathcal{D}_{\epsilon}$ and attaching map $\phi\colon \partial T_L\to \partial \mathcal{D}_{\epsilon}$ satisfying $\phi\circ \rho=\iota\circ \phi$ for the reflection $\rho(z,t)=(z,-t)$.

Similar to the cross-cap case, we next introduce an improved extension operator $K\colon W^{1,2}(T_L)\to W^{1,2}(\mathcal{D}_{\epsilon})$ by distinguishing between the even functions
$$
\mathcal{E}:=\{u\in W^{1,2}(T_L)\mid u(z,-t)=u(z,t)\}
$$
and odd functions
$$
\mathcal{O}:=\{u\in W^{1,2}(T_L)\mid u(z,-t)=-u(z,t)\}
$$
with respect to the reflection $\rho$. In this setting, note once again that $\mathcal{E}\oplus \mathcal{O}$ gives an orthogonal decomposition of $W^{1,2}(T_L)$, and if $u\in \mathcal{O}$, then $u$ vanishes on $\Sph^1\times\{0\}$. Denoting by $\pi_E\colon W^{1,2}(T_L)\to \mathcal{E}$ and $\pi_O\colon W^{1,2}(T_L)\to \mathcal{O}$ the associated projections, we then have the following.

\begin{lemma}\label{hand.ext.bds}
For $L\geq \frac{3}{2}\log(2)$, there exists an operator $K\colon W^{1,2}(T_L)\to W^{1,2}(\mathcal{D}_{\epsilon})$ such that
$$
K(u)\circ \phi=u\text{ on }\partial T_L,
$$
$$
\|K(u)\|_{L^{\infty}(\mathcal{D}_{\epsilon})}\leq 2\|u\|_{L^{\infty}(\mathcal{D}_{\epsilon})},
$$
and writing $u_E=\pi_E(u)$ and $u_O=\pi_O(u)$, we have
$$
\|d[K(u_E)]\|_{L^2(\mathcal{D}_{\epsilon})}^2\leq (1+Ce^{-2L})\|d(u_E)\|_{L^2(T_L)}^2
$$
and
$$
\|d[K(u_O)\|_{L^2(\mathcal{D}_{\epsilon})}^2=\|d(u_O)\|_{L^2(T_L)}^2.
$$
Moreover, we have
$$
K(u_E)\circ \iota=K(u_E),
$$
$$
K(u_O)\circ \iota =-K(u_O),
$$
and $K(u_O)\equiv 0$ on $D_{ e^{-L}\epsilon}(p)\cup D_{ e^{-L}\epsilon}(q)$.
\end{lemma}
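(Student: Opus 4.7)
The plan is to mirror the construction in the proof of Lemma \ref{ext.bds}, treating the odd and even parts of $u$ separately and then adding them, with two disks to fill in instead of one, related by the involution $\iota$. Throughout, the key structural input is the compatibility $\phi\circ\rho=\iota\circ\phi$, which lets the $\rho$-even/odd splitting on $T_L$ be matched with an $\iota$-even/odd splitting on $\mathcal{D}_\eps$.

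On the odd summand $\mathcal{O}$, I would use an explicit conformal gluing. Since every $u_O\in\mathcal{O}$ vanishes on $\Sph^1\times\{0\}$, split $T_L$ into its two half-cylinders $\Sph^1\times[-L,0]$ and $\Sph^1\times[0,L]$ and map them onto the annular regions $A_\eps^-:=D_{\eps}(p)\setminus\overline{D_{e^{-L}\eps}(p)}$ and $A_\eps^+:=D_{\eps}(q)\setminus\overline{D_{e^{-L}\eps}(q)}$ by the explicit maps
\[
F_-(x)=\bigl(\tfrac{x-p}{|x-p|},\,-L-\log(|x-p|/\eps)\bigr),\qquad F_+(x)=\bigl(A_v\tfrac{x-q}{|x-q|},\,L+\log(|x-q|/\eps)\bigr),
\]
tailored so that $F_\pm$ is inverse to $\phi$ on the outer circles and sends each inner circle to $\Sph^1\times\{0\}$. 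Setting $K(u_O):=u_O\circ F_\pm$ on $A_\eps^\pm$ and $K(u_O):=0$ on $D_{e^{-L}\eps}(p)\cup D_{e^{-L}\eps}(q)$ yields a well-defined element of $W^{1,2}(\mathcal{D}_\eps)$ precisely because $u_O$ vanishes on $\Sph^1\times\{0\}$. Conformal (or anti-conformal, due to the reflection $A_v$) invariance of the Dirichlet integral in two dimensions gives exact energy preservation, while the trace condition and the $L^\infty$ bound $\|K(u_O)\|_\infty\leq\|u_O\|_\infty$ are immediate. The critical symmetry to verify is $K(u_O)\circ\iota=-K(u_O)$: for $x\in A_\eps^-$ a short computation using $A_v^2=\Id$ shows $F_+(\iota(x))=\rho(F_-(x))$, and combining this with the $\rho$-oddness of $u_O$ yields the claim.

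On the even summand $\mathcal{E}$, I would use ordinary harmonic extension on each of the two disks, with boundary data transported from $\Sph^1\times\{\pm L\}$ via $\phi$. Since $u_E\circ\rho=u_E$ and $\phi\circ\rho=\iota\circ\phi$, the prescribed boundary data on $\partial D_\eps(p)$ and $\partial D_\eps(q)$ are related by $\iota$; uniqueness of the harmonic extension then forces $K(u_E)\circ\iota=K(u_E)$, and the maximum principle gives the $L^\infty$ bound with constant $1$. Applying Lemma \ref{hex.lem} separately to each half-cylinder $\Sph^1\times[0,L]$ and $\Sph^1\times[-L,0]$ (each of length $L\geq\tfrac{3}{2}\log 2$), with the target disk $D_\eps(\cdot)$ identified with $\DD^2$ by a Euclidean rescaling and using conformal invariance of the energy to absorb the scale factor $\eps$, produces the desired $(1+Ce^{-2L})$-bound on each disk, which sum to the full estimate on $\mathcal{D}_\eps$.

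Finally, set $K(u):=K(u_E)+K(u_O)$. Since $K(u_E)$ is $\iota$-invariant and $K(u_O)$ is $\iota$-antiinvariant, the two summands are $L^2$- and $W^{1,2}$-orthogonal on $\mathcal{D}_\eps$, so the energy estimates combine without cross terms; the $L^\infty$ bound becomes $\|K(u)\|_\infty\leq\|u_E\|_\infty+\|u_O\|_\infty\leq 2\|u\|_\infty$, the trace identity $K(u)\circ\phi=u$ and the vanishing of $K(u_O)$ on the small inner disks both hold by construction, and the claimed $\iota$-symmetries on the two summands are exactly what was checked above. The one genuinely error-prone step is the verification of the $\iota$-oddness of $K(u_O)$, which hinges on carefully unwinding the twist $A_v$ in $\phi$ at the $+L$ end, but once the identity $F_+\circ\iota=\rho\circ F_-$ is in hand the rest of the argument is a direct transcription of Lemma \ref{ext.bds} to the two-disk setting.
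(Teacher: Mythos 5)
Your proof is correct and follows essentially the same route as the paper's, which itself directly mirrors the construction of Lemma~\ref{ext.bds}: conformal transport of the odd part (zero-extended on the inner disks, exact energy preservation), harmonic extension of the even part via Lemma~\ref{hex.lem}, and the equivariance $F\circ\iota=\rho\circ F$ to get the $\iota$-(anti)invariance of the two summands. The only difference is that you write out the maps $F_\pm$ explicitly and check the key identity $F_+\circ\iota=\rho\circ F_-$ by hand (using $A_v^2=\Id$), where the paper leaves the extension of $\phi^{-1}$ and its equivariance implicit; your computation is right.
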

\begin{proof}
The construction is similar to that of Lemma \ref{ext.bds}. As in the proof of Lemma \ref{ext.bds}, note first that the inverse of the attaching map $\phi^{-1}\colon \partial \mathcal{D}_{\epsilon}\to \partial T_L$ extends naturally to a conformal map
$$
F: [D_{\epsilon}(p)\setminus D_{e^{-L}\epsilon}(p)]\cup [D_{\epsilon}(q)\setminus D_{e^{-L}\epsilon}(q)]\to T_L\setminus (\Sph^1\times \{0\})
$$
satisfying 
\begin{equation}\label{f.equiv}
F\circ \iota=\rho \circ F.
\end{equation}
For $u\in \mathcal{O}$, we then define $K(u)\in W^{1,2}(\mathcal{D}_{\epsilon})$ by setting
$$
K(u)=u\circ F\text{ on }[D_{\epsilon}(p)\setminus D_{e^{-L}\epsilon}(p)]\cup [D_{\epsilon}(q)\setminus D_{e^{-L}\epsilon}(q)]
$$
and
$$
K(u)=0\text{ on }D_{e^{-L}\epsilon }(p)\cup D_{ e^{-L}\epsilon}(q),
$$
using crucially the fact that $u\circ F=0$ on $\partial D_{e^{-L}\epsilon}(p)\cup \partial D_{e^{-L}\epsilon}(q)$. It is then clear that $K\colon\mathcal{O}\to W^{1,2}(\mathcal{D}_{\epsilon})$ preserves the $L^{\infty}$ norm as well as the Dirichlet energy, by the conformality of $F$, and the condition $K(u)\circ \iota=-K(u)$ is an immediate consequence of \eqref{f.equiv}. 

For a general $u=u_E+u_O$, we then define
$$
K(u):=H(u_E)+K(u_O),
$$
where $H(u_E)$ denotes the harmonic extension of $u_E\circ \phi^{-1}\in W^{1/2,2}(\partial\mathcal{D}_{\epsilon})$ into $\mathcal{D}_{\epsilon}$, for which the desired estimates follow from Lemma \ref{hex.lem}, exactly as in the proof of Lemma \ref{ext.bds}. Moreover, since $u_E\circ \phi^{-1}=u_E\circ \rho \circ \phi^{-1}\circ \iota=u_E\circ \phi^{-1}\circ \iota$ on $\partial \mathcal{D}_{\epsilon}$, it follows that $K(u_E)\circ \iota=K(u_E)$, as desired.
\end{proof}

\begin{remark}\label{h.vect.valued}
Once again, note that the Sobolev spaces in Lemma \ref{hand.ext.bds} could refer to the vector-valued Sobolev spaces $W^{1,2}(T_L,\mathbb{R}^n)$ and $W^{1,2}(\mathcal{D}_{\epsilon},\mathbb{R}^n)$ for arbitrary $n$, with no effect on the conclusions of Lemma \ref{ext.bds}.
\end{remark}

In particular, for $(M',g'_{\epsilon,L})$ defined as above, we can then define a map
$$
S\colon W^{1,2}(M')\to W^{1,2}(M)
$$
by setting
$$
S(u)(x)=u(x)\text{ for }x\in N_{p,v,\epsilon}
$$
and
$$
S(u)(x)=K(u|_{T_L})(x)\text{ for }x\in T_L.
$$
It then follows from Lemma \ref{hand.ext.bds} that 
\begin{equation}\label{s.ener.drop}
\|d(S(u))\|_{L^2(M)}^2=\|du\|_{L^2(M')}^2+\|d S(u)_E\|_{L^2(\mathcal{D}_{\epsilon})}^2-\|du_E\|_{L^2(T_L)}^2,
\end{equation}
where $S(u)_E:=\frac{1}{2}(u+u\circ \iota)$ on $\mathcal{D}_{\epsilon}$ and $u_E=\pi_E(u|_{T_L})$ as above, and
\begin{equation}\label{s.loc.even}
S(u)=S(u)_E\text{ on }D_{e^{-L}\epsilon}(p)\cup D_{e^{-L}\epsilon}(q).
\end{equation}

\subsection{Proof of Proposition \ref{hand.prop}} As in the cross-cap case, we observe next that the proof of Proposition \ref{hand.prop} can be reduced to the following more quantitative lemma.

\begin{lemma}\label{hand.grad.small}
In the setting of Proposition \ref{hand.prop}, there exists $L=L_0\in (1,\infty)$ and $C<\infty$ independent of $\epsilon\in \left(0,\frac{\delta_0^2}{4}\right)$ such that if $\Lambda_1(M',[g'_{\epsilon,L}])\leq \bar{\lambda}_1(M,g_0)$, then there exists a map $\Phi_{\epsilon}:M\to \mathbb{R}^{n+1}$ satisfying
$$
\Delta_{g_1}\Phi_{\epsilon}=\lambda_1(M,g_0)f \Phi_{\epsilon},
$$
$$
\|1-|\Phi_{\epsilon}|\|_{L^2(M,g_1)}^2\leq C\epsilon^2,
$$
and
$$
|d\Phi_{\epsilon}(p)|^2\leq C\epsilon|\log\epsilon|.
$$
\end{lemma}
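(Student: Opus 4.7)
The plan is to follow the broad strategy of the proof of Lemma~\ref{cap.grad.small}, replacing the cross-cap ingredients with the handle analogs provided by Lemma~\ref{hand.ext.bds} and supplementing the argument with a 0-th order comparison that accounts for the fact that the attaching points $p$ and $q$ are now separated by positive distance $\sqrt{\epsilon}$. As in the cross-cap case, first apply Proposition~\ref{balanced.f} to the functional $\beta(u):=\int_M S(u)\,dv_{g_0}$ to produce a sphere-valued map $F\in W^{1,2}(M',\mathbb{S}^n)$ satisfying $\|dF\|^2_{L^2(M',g'_{\epsilon,L})}\leq\bar\lambda_1(M,g_0)$ and $\int_M \hat F\,dv_{g_0}=0$, where $\hat F:=S(F)$. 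Combining this with the usual Poincar\'e estimate on $1-|\hat F|^2$ on each disk $D_\epsilon(p),D_\epsilon(q)$ yields $\|\hat F\|^2_{L^2(M,g_0)}\geq \area(M,g_0)-C\epsilon^2\|d\hat F\|_{L^2(\mathcal{D}_\epsilon)}$, from which the bound $\|1-|\Phi_\epsilon|\|^2_{L^2(M,g_1)}\leq C\epsilon^2$ will eventually follow.

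Next, project $\hat F$ onto the first eigenspace $\mathcal{E}_{\lambda_1}$ to obtain a candidate $\Phi\in\mathcal{E}_{\lambda_1}$. The Cauchy--Schwarz argument in the quadratic form $Q_{g_0}$, combined with the energy identity \eqref{s.ener.drop} and Lemma~\ref{hand.ext.bds}, gives $\|\hat F-\Phi\|^2_{W^{1,2}}\leq C'e^{-2L}\|d\hat F_E\|^2_{L^2(\mathcal{D}_\epsilon)}+C'\epsilon^2\|dF\|_{L^2(T_L)}$. The crucial new feature compared with the cross-cap case is that $\Phi_E$ no longer vanishes to first order at $p$: Taylor expanding using $\iota(p)=q$, $d\iota(p)=A_v$, and the uniform $C^3$ bound on $\Phi$, one has $d\Phi_E(p)=\tfrac12 d\Phi(p)(I+A_v)+O(\sqrt\epsilon)$, so that $\|d\Phi_E\|^2_{L^2(\mathcal{D}_\epsilon)}\leq C\epsilon^2|d\Phi(p)v|^2+C\epsilon^3$. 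Fixing $L=L_0$ large enough that the self-referential term can be absorbed, together with $\|dF\|^2_{L^2(T_L)}\leq C\epsilon^2$ from the same energy comparison used in the cross-cap proof, one arrives at the main $W^{1,2}$ estimate
\[
\|\hat F-\Phi\|^2_{W^{1,2}}\leq C\bigl(\epsilon^2|d\Phi(p)v|^2+\epsilon^3\bigr).
\]
The bootstrap of the cross-cap case can then be repeated on $\Phi_O$ on $D_{e^{-L_0}\epsilon}(p)$, using $d\Phi_O(p)=\tfrac12 d\Phi(p)(I-A_v)+O(\sqrt\epsilon)$ together with the vanishing $\hat F_O\equiv 0$ on this disk from \eqref{s.loc.even}, to obtain the conditional bound $|d\Phi(p)v^\perp|^2\leq C_{L_0}\bigl(|d\Phi(p)v|^2+\epsilon\bigr)$ for any unit vector $v^\perp$ perpendicular to $v$.

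The hardest step is the 0-th order bound on $|d\Phi(p)v|$. Let $\bar F(t):=\tfrac{1}{2\pi}\int_{\mathbb{S}^1}F(z,t)\,dz$ denote the $0$-Fourier mode of $F$ on $T_L$. The mean-value property of the harmonic extension shows that $\hat F_E(p)=\hat F_E(q)=\tfrac12\bigl(\bar F(L)+\bar F(-L)\bigr)$, which together with \eqref{s.loc.even} yields the exact pointwise equality $\hat F(p)=\hat F(q)$. To transfer this equality to $|\Phi(p)-\Phi(q)|$, decompose $u:=\hat F-\Phi$ into radial Fourier modes $u(p+re^{i\theta})=\sum_n a_n(r)e^{in\theta}$ on $D_{1/2}(p)$ and express $a_0(\epsilon)=a_0(1/2)+\int_{1/2}^{\epsilon}a_0'(r)\,dr$; Cauchy--Schwarz with the weighted measure $\tfrac{dr}{r}$ gives the logarithmic Poincar\'e-type estimate
\[
|\bar F_{\partial D_\epsilon(p)}-\bar{\Phi}_{\partial D_\epsilon(p)}|^2\leq C|\log\epsilon|\,\|\hat F-\Phi\|^2_{W^{1,2}(M)}.
\]
Combined with $\bar{\Phi}_{\partial D_\epsilon(p)}=\Phi(p)+O(\epsilon^2)$, the analogous estimate at $q$, and the elementary Fourier bound $|\bar F(L)-\bar F(-L)|^2\leq \tfrac{L_0}{\pi}\|dF\|^2_{L^2(T_L)}\leq C\epsilon^2$, the triangle inequality yields $|\Phi(p)-\Phi(q)|^2\leq C|\log\epsilon|\bigl(\epsilon^2|d\Phi(p)v|^2+\epsilon^3\bigr)+C\epsilon^2$. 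Since $|\Phi(p)-\Phi(q)|^2\geq \tfrac12\epsilon|d\Phi(p)v|^2-C\epsilon^2$ by Taylor expansion and AM--GM, for $\epsilon$ small the $|\log\epsilon|\epsilon^2|d\Phi(p)v|^2$ term is absorbed into the LHS, giving $|d\Phi(p)v|^2\leq C\epsilon|\log\epsilon|$; together with the conditional bound on $|d\Phi(p)v^\perp|$, this yields $|d\Phi(p)|^2\leq C\epsilon|\log\epsilon|$.

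The main obstacle is the sharpness of this last step: a naive pointwise argument from the $W^{1,2}$ estimate via mean-value or elliptic regularity on the small disks only produces $|\hat F(p)-\Phi(p)|^2\lesssim|d\Phi(p)v|^2+\epsilon$, which is insufficient to close the circular dependence between the $W^{1,2}$ bound and the first-order data at $p$. The extra $|\log\epsilon|$ gained from the radial annular Poincar\'e estimate is exactly what allows the argument to close, and is responsible for the additional $|\log\epsilon|$ factor in the statement of Lemma~\ref{hand.grad.small} compared to the cleaner cross-cap bound $|d\Phi(p)|^2\leq C\epsilon$.
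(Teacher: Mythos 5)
Your proposal follows the same overall strategy as the paper: apply Proposition~\ref{balanced.f} with $\beta(u)=\int_M S(u)\,dv_{g_0}$ to obtain a balanced map $\hat F=S(F)$; derive a $W^{1,2}$ estimate on $\hat F-\Phi$ by combining the quadratic-form Cauchy--Schwarz argument with the energy drop formula \eqref{s.ener.drop}; use the crucial twist $A_v v=v$, $A_v v^\perp=-v^\perp$ in the attaching map to control $d\Phi_E$; use the vanishing $\hat F_O\equiv 0$ on the inner disks to control $d\Phi_O$; and obtain a zeroth-order estimate on $|\Phi(p)-\Phi(q)|$ via the fundamental theorem of calculus on the tube and a logarithmic trace-type estimate (which the paper cites as~\cite[Lemma 8.15]{KKMS} rather than re-deriving via radial Fourier modes as you do). All the key ingredients are present and correctly used.

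The one genuine point of divergence is in the final step. The paper avoids any circular dependence: it first establishes the \emph{non-sharp} bound $\|\hat F-\Phi\|_{W^{1,2}}^2\leq C\epsilon^2$ using only $\|d\Phi\|_{L^2(\mathcal{D}_\epsilon)}^2\leq C\epsilon^2$, feeds this into the zeroth-order estimate to get $|d\Phi(p)\cdot v|^2\leq C\epsilon|\log\epsilon|$, then establishes $|d\Phi_E(x)\cdot v|\leq C\sqrt{\epsilon|\log\epsilon|}$ and $|d\Phi_E(x)\cdot v^\perp|\leq C\sqrt\epsilon$ (the latter directly from $|d\Phi(x)-d\Phi(\iota(x))|\leq C\sqrt\epsilon$, with no reference to the zeroth-order estimate) to obtain $\|d\Phi_E\|_{L^2(\mathcal{D}_\epsilon)}^2\leq C\epsilon^3|\log\epsilon|$, and then bootstraps through $\hat F_O\equiv 0$. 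You instead take the refined self-referential bound $\|\hat F-\Phi\|_{W^{1,2}}^2\leq C(\epsilon^2|d\Phi(p)\cdot v|^2+\epsilon^3)$ and close a loop with the zeroth-order estimate and the Taylor lower bound $|\Phi(p)-\Phi(q)|^2\geq\frac{1}{2}\epsilon|d\Phi(p)\cdot v|^2-C\epsilon^2$. Both routes are valid; yours is slightly more elaborate but in fact yields the marginally sharper conclusion $|d\Phi(p)|^2\leq C\epsilon$, which of course implies the stated $C\epsilon|\log\epsilon|$ bound. Note that your final paragraph states the opposite---that the $|\log\epsilon|$ factor is forced by the radial Poincar\'e step---but carrying out your own absorption argument carefully, the logarithm cancels out, since $\epsilon^2|\log\epsilon|\ll\epsilon$ for small $\epsilon$. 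This is a minor inconsistency in your summary rather than an error in the argument; the paper simply never attempts the sharpening, since $C\epsilon|\log\epsilon|$ is ample for Proposition~\ref{hand.prop}.
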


With Lemma \ref{hand.grad.small}, Proposition \ref{hand.prop} follows with no additional work.

\begin{proof}[Proof of Proposition \ref{hand.prop}]
The same argument used in Section 3.2 to deduce Proposition \ref{cap.prop} from Lemma \ref{cap.grad.small} applies verbatim to show that Lemma \ref{hand.grad.small} implies Proposition \ref{hand.prop}.
\end{proof}

It remains to prove Lemma \ref{hand.grad.small}. The proof is broadly similar to that of Lemma \ref{cap.grad.small}, though relating evenness and oddness with respect to the involution $\iota\colon \mathcal{D}_{\epsilon}\to \mathcal{D}_{\epsilon}$ to gradient estimates is slightly more delicate.

\begin{proof}[Proof of Lemma \ref{hand.grad.small}]
Similarly to the proof of Lemma~\ref{cap.grad.small}, the map $F$ and the objects derived from it depend on the choice of $\epsilon$ and $L$. We do not include that in the notation for these objects, but keep careful track on the dependence on the parameters in all the estimates.

To begin, fix a small but arbitrary $\epsilon>0$ and some $L\geq \frac{3}{2}\log(2)$, which by the end of the proof we will set to be a fixed constant $L=L_0$ independent of $\epsilon>0$. Assuming that
$$
\Lambda_1(M',[g'_{\epsilon,L}]\leq \bar{\lambda}_1(M,g_0),
$$
we apply Proposition \ref{balanced.f} to the functional $\beta\in W^{1,2}(M')^*$ given by
$$
\beta(u):=\int_M S(u)\, dv_{g_0},
$$
to deduce the existence of a map $F\in W^{1,2}(M',\Sph^n)$ for some $n\in \mathbb{N}$ satisfying
\begin{equation}\label{f.ener.2}
\int_{M'}|dF|_{g'}^2\,dv_{g'}\leq \Lambda_1(M',[g'])\leq \bar{\lambda}_1(M,g_0),
\end{equation}
such that $\hat{F}=S(F)\in W^{1,2}(M,\mathbb{R}^{n+1})$ satisfies
\begin{equation}\label{hand.bal}
\int_M\hat{F}\,dv_{g_0}=0.
\end{equation}
By Lemma \ref{hand.ext.bds} and the definition of $S$, we note again that $|\hat{F}|=|F|=1$ on $M\setminus \mathcal{D}_{\epsilon}$, while $|\hat{F}|\leq 2$ on $\mathcal{D}_{\epsilon}$.

From here, we can argue exactly as in the proof of Lemma \ref{cap.grad.small}, using \eqref{s.ener.drop} in place of \eqref{f.ener.bd}, to arrive at an estimate of the form
\begin{equation}\label{h1close.o}
\|\hat{F}-\Phi\|_{W^{1,2}_{g_1}}^2\leq C\left(\|d\hat{F}_E\|_{L^2(\mathcal{D}_{\epsilon})}^2-\|dF_E\|_{L^2(T_L)}^2+\epsilon^2\|dF\|_{L^2(T_L)}\right),
\end{equation}
where $C=C(M,g_1)$ is a constant independent of $\epsilon$ and $L$, $\hat{F}_E:=\frac{1}{2}(F+F\circ \iota)$ on $\mathcal{D}_{\epsilon}$, and $\Phi$ denotes the projection of $\hat{F}$ onto the first eigenspace for $\Delta_{g_0}$, in the sense of Remark \ref{eigen.basics}. Moreover, combining the estimates of Lemma \ref{hand.ext.bds} with the nonnegativity of the right-hand-side of \eqref{h1close.o}, an argument identical to the derivation of \eqref{h1close.2} in the previous section yields an estimate of the form
\begin{eqnarray*}
\|\hat{F}-\Phi\|_{W^{1,2}}^2&\leq & C' e^{-2L}\|d\hat{F}_E\|_{L^2(\mathcal{D}_{\epsilon})}^2+C'\epsilon^2\|dF\|_{L^2(T_L)}\\
&\leq &Ce^{-2L}(\|\hat{F}-\Phi\|^2_{W^{1,2}}+\|d\Phi_E\|_{L^2(\mathcal{D}_{\epsilon})}^2)+C'\epsilon^2\|dF\|_{L^2(T_L)}.
\end{eqnarray*}
We now fix $L=L_0$ large enough such that $Ce^{-2L}<\frac{1}{2}$, and rearrange this inequality to obtain the estimate
\begin{equation}\label{or.ineq.1}
\|\hat{F}-\Phi\|_{W^{1,2}}^2\leq C\|d\Phi_E\|_{L^2(\mathcal{D}_{\epsilon})}^2+C\epsilon^2\|dF\|_{L^2(T_L)}.
\end{equation}
Note that the uniform $C^3$ estimates \eqref{c3.bd} continue to hold in the present setting, so that
\begin{equation}\label{c3.o}
\|\Phi\|_{C^3}\leq C'\|\hat{F}\|_{W^{1,2}}\leq C''
\end{equation}
with all constants independent of $\epsilon>0$. As an immediate consequence, we have the non-sharp bound
$$
\|d\Phi\|_{L^2(\mathcal{D}_{\epsilon})}^2\leq C\epsilon^2,
$$
which together with \eqref{or.ineq.1} implies that
$$
\|\hat{F}-\Phi\|_{W^{1,2}}^2\leq C\epsilon^2,
$$
and therefore
$$
\|d\hat{F}\|_{L^2(\mathcal{D}_{\epsilon})}^2\leq 2\|\hat{F}-\Phi\|_{W^{1,2}}^2+\|d\Phi\|_{L^2(\mathcal{D}_{\epsilon})}^2\leq C'\epsilon^2.
$$
In particular, together with the nonnegativity of the right-hand side of \eqref{h1close.o}, this implies that
\begin{eqnarray*}
\|dF\|_{L^2(T_L)}^2&=&\|dF_O\|_{L^2(T_L)}^2+\|dF_E\|_{L^2(T_L)}^2\\
&=&\|d\hat{F}_O\|_{L^2(\mathcal{D}_{\epsilon})}^2+\|dF_E\|_{L^2(T_L)}^2\\
&\leq &\|d\hat{F}_O\|_{L^2(\mathcal{D}_{\epsilon})}^2+\|d\hat{F}_E\|_{L^2(\mathcal{D}_{\epsilon})}^2+\epsilon^2\|dF\|_{L^2(T_L)}\\
&\leq &C'\epsilon^2+\epsilon^2\|dF\|_{L^2(T_L)},
\end{eqnarray*}
giving us an estimate of the form
\begin{equation}\label{tube.grad.bd}
\|dF\|_{L^2(T_L)}^2\leq C''\epsilon^2.
\end{equation}
Applying this estimate to the right-hand side of \eqref{or.ineq.1}, we then see that
\begin{equation}\label{h1.o2}
\|\hat{F}-\Phi\|_{W^{1,2}}^2\leq C\left(\|d\Phi_E\|_{L^2(\mathcal{D}_{\epsilon})}^2+\epsilon^3\right)\leq C'\epsilon^2.
\end{equation}
As an immediate consequence, we see that
\begin{equation}\label{phi1.l2.diff}
\|1-|\Phi|\|_{L^2(M,g_1)}^2\leq C\epsilon^2.
\end{equation}

Next, recalling that $L=L_0$ has already been fixed, a simple combination of \eqref{tube.grad.bd} with the fundamental theorem of calculus yields an estimate of the form
$$
\left|\int_{\Sph^1}F(\cdot,L)-F(\cdot,-L)\right|\leq C\|dF\|_{L^2(T_L)}\leq C'\epsilon,
$$
which is evidently equivalent to the bound
$$
\frac{1}{\epsilon}\left|\int_{\partial D_{\epsilon}(p)}\hat{F}\,dv_{g_p}-\int_{\partial D_{\epsilon}(q)}\hat{F}\,dv_{g_p}\right|\leq C'\epsilon
$$
for $\hat{F}$. On the other hand, applying [KKMS, Lemma 8.15] with $u=|\hat{F}-\Phi|$, we also have an estimate of the form
$$
\frac{1}{\epsilon}\left|\int_{\partial \mathcal{D}_{\epsilon}}|\hat{F}-\Phi|\,dv_{g_p}\right|\leq C\sqrt{|\log\epsilon|}\|\hat{F}-\Phi\|_{W^{1,2}(M)},
$$
and combining these two estimates with \eqref{h1.o2}, we deduce that
\begin{eqnarray*}
\left|\frac{1}{\epsilon}\int_{\partial D_{\epsilon}(p)}\Phi\,dv_{g_p}-\frac{1}{\epsilon}\int_{\partial D_{\epsilon}(q)}\Phi\,dv_{g_p}\right|&\leq& C\epsilon+C\sqrt{|\log\epsilon|}\|\hat{F}-\Phi\|_{W^{1,2}(M)}\\
&\leq &C'\epsilon\sqrt{|\log\epsilon|}.
\end{eqnarray*}

Now, by \eqref{c3.o}, note that we have
$$
|\Phi(z)-\Phi(p)|\leq C|z-p|\leq C\epsilon\text{ on }\partial D_{\epsilon}(p)
$$
and $|\Phi(z)-\Phi(q)|\leq C\epsilon$ on $\partial D_{\epsilon}(q)$, which together with the preceding integral estimate yields an estimate of the form
$$
|\Phi(p)-\Phi(q)|\leq C\epsilon\sqrt{|\log\epsilon|}.
$$
On the other hand, by another application of \eqref{c3.o}, note that $\Phi$ satisfies an estimate of the form
$$
|\Phi(p)+d\Phi(p)\cdot (z-p)-\Phi(z)|\leq C|z-p|^2
$$
for all $z\in D_{\delta_0}(p)$, and applying this with $z=q=p+\sqrt{\epsilon}v$ gives
$$
|\sqrt{\epsilon} d\Phi(p)\cdot v+\Phi(p)-\Phi(q)|\leq C\epsilon.
$$
In particular, since $|\Phi(p)-\Phi(q)|\leq C\epsilon\sqrt{|\log\epsilon|}$, it follows that
$$
\sqrt{\epsilon}|d\Phi(p)\cdot v|\leq C\epsilon+|\Phi(p)-\Phi(q)|\leq C'\epsilon\sqrt{|\log\epsilon|},
$$
and therefore
\begin{equation}\label{dphi.v}
|d\Phi(p)\cdot v|\leq C'\sqrt{\epsilon|\log\epsilon|}.
\end{equation}
Moreover, using $q$ as a basepoint instead of $p$, the same argument gives $|d\Phi(q)\cdot v|\leq C'\sqrt{\epsilon|\log\epsilon|}$ as well, and by another application of the $C^3$ estimate \eqref{c3.o}, it follows that
\begin{equation}\label{dphi.v.d}
|d\Phi(x)\cdot v|\leq C''\sqrt{\epsilon|\log\epsilon|}\text{ for all }x\in \mathcal{D}_{\epsilon}.
\end{equation}

To complete the proof the lemma, we need a matching estimate for $d\Phi(p)\cdot v^{\perp}$, where $v^{\perp}$ is a unit vector orthogonal to $v$, and this is where the choice of attaching map plays a key role. To this end, recall that the involution $\iota\colon \mathcal{D}_{\epsilon}\to \mathcal{D}_{\epsilon}$ is given by
$$
\iota(x)=q+A(x-p)\text{ for }x\in D_{\epsilon}(p)
$$
and
$$
\iota(x)=p+A(x-q)\text{ for }x\in D_{\epsilon}(q),
$$
where $A$ is the reflection for which $Av=v$ and $Av^{\perp}=-v^{\perp}$. For any $x\in \mathcal{D}_{\epsilon}$, the even part $\Phi_E=\frac{1}{2}[\Phi+\Phi\circ \iota]$ then satisfies
$$
d\Phi_E(x)=\frac{1}{2}[d\Phi(x)+d\Phi(\iota(x))\cdot A],
$$
and it follows from \eqref{dphi.v.d} that
$$
|d\Phi_E(x)\cdot v|=\frac{1}{2}|d\Phi(x)\cdot v+d\Phi(\iota(x))\cdot v|\leq C''\sqrt{\epsilon|\log\epsilon|}
$$
for all $x\in \mathcal{D}_{\epsilon}$. On the other hand, in the $v^{\perp}$ direction, we see that
$$
d\Phi_E(x)\cdot v^{\perp}=\frac{1}{2}[d\Phi(x)-d\Phi(\iota(x))]\cdot v^{\perp},
$$
while another application of the $C^3$ estimates \eqref{c3.o} gives an estimate of the form
$$
|d\Phi(x)-d\Phi(\iota(x))|\leq C|x-\iota(x)|\leq 2C\sqrt{\epsilon}
$$
for all $x\in \mathcal{D}_{\epsilon}$, so that
$$
|d\Phi_E(x)\cdot v^{\perp}|\leq C\sqrt{\epsilon}
$$
for all $x\in \mathcal{D}_{\epsilon}$ as well. Putting these estimates together, we then see that
$$
|d\Phi_E(x)|\leq C'\sqrt{\epsilon|\log\epsilon|}
$$
for all $x\in \mathcal{D}_{\epsilon}$, and integrating over $\mathcal{D}_{\epsilon}$ yields
\begin{equation}\label{dphi.e.tiny}
\|d\Phi_E\|_{L^2(\mathcal{D}_{\epsilon})}^2\leq C\epsilon^3|\log\epsilon|.
\end{equation}

Finally, recall from Lemma \ref{hand.ext.bds} and the definition of $\hat{F}$ that $\hat{F}_O\equiv 0$ on $D_{e^{-L}\epsilon}(p)\cup D_{e^{-L}\epsilon}(q)$, so that
$$
\|d\Phi_O\|_{L^2(D_{e^{-L}\epsilon}(p))}^2\leq \|\hat{F}-\Phi\|_{W^{1,2}}^2.
$$
Using \eqref{h1.o2} and \eqref{dphi.e.tiny} to estimate the right-hand side, we then see that
$$
\|d\Phi_O\|_{L^2(D_{e^{-L}\epsilon}(p))}^2\leq C\left(\|d\Phi_E\|_{L^2(\mathcal{D}_{\epsilon})}^2+\epsilon^3\right)\leq C'\epsilon^3|\log\epsilon|,
$$
and summing with \eqref{dphi.e.tiny} gives
$$
\|d\Phi\|_{L^2(D_{e^{-L}\epsilon}(p))}^2\leq C\epsilon^3|\log\epsilon|.
$$
Recalling once again that $L=L_0$ is a fixed constant independent of $\epsilon$, it follows from the $C^3$ estimates \eqref{c3.o} that
\begin{equation}\label{fin.grad.bd}
|d\Phi(p)|^2\leq \frac{\|d\Phi\|_{L^2(D_{e^{-L}\epsilon}(p))}^2}{|D_{e^{-L}\epsilon}(p)|}+C\epsilon^2\leq C'\epsilon|\log\epsilon|.
\end{equation}
Writing $\Phi_{\epsilon}=\Phi$, we then see that \eqref{fin.grad.bd} and \eqref{phi1.l2.diff} together give the desired estimates, completing the proof of the lemma.

\end{proof}

\section{Proof of Theorem~\ref{gap.thm}}

With Theorems~\ref{cap.gap} and~\ref{hand.gap} in place, Theorem~\ref{gap.thm} follows form Theorem~\ref{gap.suff} via a standard induction argument, see e.g.~\cite{Petrides1, PetridesNew, KKMS}. We reproduce this argument below.

We prove Theorem~\ref{gap.thm} by induction on $\chi(M)$. If $\chi(M) = 2$, then $M$ is a $2$-sphere and the round metric is $\bar\lambda_1$-maximizing by the classical result of Hersch~\cite{Hersch}. Choosing $g_0$ to be the round metric in Theorems~\ref{cap.gap} and~\ref{hand.gap} concludes the proof of the base of induction. Suppose now that the theorem is proved for all $M_0$ with $\chi(M_0)>\chi(M)$. Note that if $M\approx M_0\# \mathbb{RP}^2$ or $M\approx M_0\# \T^2$, then $\chi(M_0)>\chi(M)$, so that by the induction hypothesis there exists a $\bar\lambda_1$-maximizing metric $g_0$ on $M_0$, i.e. $\bar\lambda_1(M_0,g_0) = \Lambda_1(M_0)$. Applying Theorem~\ref{cap.gap} or~\ref{hand.gap} to that metric $g_0$ yields $\Lambda_1(M_0) = \bar\lambda_1(M_0,g_0)<\Lambda_1(M)$ and, as a result, there exists a $\bar\lambda_1$-maximizing metric on $M$ by Theorem~\ref{gap.suff}. The proof is complete.

\end{document}